\documentclass[reqno]{amsart}
\usepackage{hyperref}

\AtBeginDocument{{\noindent\small
\emph{Electronic Journal of Differential Equations},
Vol. 2016 (2016), No. 67, pp. 1--17.\newline
ISSN: 1072-6691. URL: http://ejde.math.txstate.edu or http://ejde.math.unt.edu
\newline ftp ejde.math.txstate.edu}
\thanks{\copyright 2016 Texas State University.}
\vspace{8mm}}

\begin{document}
\title[\hfilneg EJDE-2016/67\hfil Nonlinear elliptic equations]
{Nonlinear elliptic equations and systems with linear part at resonance}

\author[P. Korman \hfil EJDE-2016/67\hfilneg]
{Philip Korman}

\address{Philip Korman   \newline
Department of Mathematical Sciences,
University of Cincinnati,
Cincinnati Ohio 45221-0025, USA}
\email{kormanp@ucmail.uc.edu}

\thanks{Submitted  March 27, 2015. Published March 10, 2016.}
\subjclass[2010]{35J61, 35J47}
\keywords{Elliptic systems at resonance; resonance at multiple eigenvalues;
 \hfill\break\indent Lazer and Leach condition}

\begin{abstract}
 The famous result of Landesman and Lazer \cite{L} dealt with resonance
 at a simple eigenvalue. Soon after publication of \cite{L},  Williams \cite{W}
 gave an extension for repeated eigenvalues. The conditions in  Williams \cite{W}
 are rather restrictive, and no examples were ever given. We show that seemingly
 different classical result by Lazer and Leach \cite{L1}, on forced harmonic
 oscillators at resonance, provides an example for this theorem.
 The article by Williams \cite{W} also contained a shorter proof.
 We use a similar approach to study resonance for $2 \times 2$ systems.
 We derive conditions for existence of solutions, which turned out to
 depend on the spectral properties of the linear part.
\end{abstract}

\maketitle
\numberwithin{equation}{section}
\newtheorem{theorem}{Theorem}[section]
\newtheorem{lemma}[theorem]{Lemma}
\allowdisplaybreaks

\section{Introduction}

The famous article by Landesman and  Lazer \cite{L} considered a
 semilinear Dirichlet  problem at resonance (on a bounded domain $D \subset \mathbb{R}^n$)
\begin{equation} \label{i1}
\Delta u+\lambda_k u+g(u)=f(x), \quad x \in D, \quad  u=0 \quad \text{on } \partial D\,.
\end{equation}
Assuming that $g(u)$ has finite limits at $\pm \infty$, and $\lambda _k$ is a
simple eigenvalue of $-\Delta$, they gave a necessary and sufficient
condition for the existence of solutions. This nonlinear version of the
Fredholm alternative has generated an enormous body of research, and perhaps
it can be seen as the beginning of the modern theory of nonlinear PDE's.
Soon after publication of \cite{L}, a more elementary proof  was given by
Williams \cite{W}. Both \cite{L} and \cite{W} were  using Schauder's fixed
point theorem.
 Williams \cite{W} has  observed that one can also handle the case of multiple
eigenvalues under a straightforward extension of the  Landesman and Lazer condition.
No examples when this condition can be verified for repeated eigenvalues were
ever given. Our first result is to observe that another famous result   of
Lazer and  Leach \cite{L1}, on forced harmonic oscillators at resonance,
provides an example for this theorem, for the  case of  double eigenvalues
of the periodic problem in one dimension. This  appears to be the only known example
in case $\lambda _k$ is not simple. It relies on some special properties of the
sine and cosine functions. Thus one has a uniform framework for these results,
into which the existence result of  de Figueiredo and   Ni \cite{FN}, its
generalization by  Iannacci et al \cite{INW} (and our two extensions)
also fit in nicely. We review these results, and connect them to the recent
 work of  Korman and Li \cite{KL}, and  Korman \cite{K2}.

We use a similar approach to give a rather complete discussion of
$2 \times 2$ elliptic systems at resonance, in case its linear part has
constant coefficients. Requiring that finite limits at infinity exist,
as in  Landesman and  Lazer, appears to be too restrictive for systems.
Instead, we use more general inequality type conditions, which are rooted
in  Lazer and Leach \cite{L1}. We derive several sufficient conditions
of this  type for systems at resonance, which turned out to depend
on the spectral properties of the linear part.

\section{An exposition and extensions of known results}

Given a bounded domain $D \subset \mathbb{R}^n$, with a smooth boundary, we denote
by $\lambda _k$ the eigenvalues of the Dirichlet problem
\begin{equation} \label{dl}
\Delta u+\lambda u=0, \quad x \in D, \quad u=0 \quad \text{on }\partial D \,,
\end{equation}
and by $\varphi _k(x)$ the corresponding eigenfunctions. For the \emph{resonant} problem
\begin{equation} \label{1}
\Delta u+\lambda_k u=f(x), \quad x \in D, \quad u=0 \quad \text{on }\partial D \,,
\end{equation}
with a given $f(x) \in L^2(D)$, the following well-known
\emph{Fredholm alternative} holds: the problem \eqref{1} has a
solution if and only if
\begin{equation} \label{2}
\int _D f(x) \varphi _k(x) \, dx=0 \,.
\end{equation}
One could expect things to be considerably harder for the nonlinear problem
\begin{equation}
\label{3}
\Delta u+\lambda_k u+g(u)=f(x), \quad x \in D, \quad u=0 \quad \text{on } \partial D \,.
\end{equation}
However, in the classical papers of  Lazer and  Leach \cite{L1}, and
 Landesman and  Lazer \cite{L} an interesting   class of nonlinearities $g(u)$
was identified, for which one still has an analog of the Fredholm alternative.
Namely, one assumes that the finite limits $g(-\infty)$ and $g(\infty)$ exist,
 and
\begin{equation} \label{4}
g(-\infty)<g(u)<g(\infty), \quad \text{for all } u \in R \,.
\end{equation}
From \eqref{3},
\begin{equation} \label{4.1}
\int _D g(u(x)) \varphi _k(x) \, dx=\int _D f(x) \varphi _k(x) \, dx \,,
\end{equation}
which implies, in view of \eqref{4},
\begin{equation} \label{5}
\begin{aligned}
&g(-\infty) \int_{\varphi _k>0} \varphi _k \, dx+g(\infty) \int_{\varphi _k<0} \varphi _k \, dx\\
&< \int _D f(x) \varphi _k \, dx
 <g(\infty) \int_{\varphi _k>0} \varphi _k \, dx+g(-\infty) \int_{\varphi _k<0} \varphi _k \, dx \,.
\end{aligned}
\end{equation}
This is a necessary condition for solvability. It was proved by  Landesman and
 Lazer \cite{L} that this condition is also sufficient for solvability.

\begin{theorem}[\cite{L}]\label{thm:1}
Assume that $\lambda _k$ is a simple eigenvalue, while $g(u) \in C(R)$ satisfies
\eqref{4}. Then for any $f(x) \in L^2(D)$ satisfying \eqref{5},
the problem \eqref{3} has a solution $u(x) \in W^{2,2}(D) \cap W_0^{1,2}(D)$.
\end{theorem}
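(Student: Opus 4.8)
The plan is to use a Lyapunov--Schmidt reduction, handling the infinite--dimensional ``range'' component by Schauder's fixed point theorem and the one--dimensional ``kernel'' component by a sign argument based on \eqref{5}. First I would normalize $\varphi_k$ so that $\int_D \varphi_k^2\,dx=1$, and write every candidate solution as $u=t\varphi_k+v$ with $t\in\mathbb{R}$ and $\int_D v\,\varphi_k\,dx=0$. Since $\lambda_k$ is simple, $\varphi_k$ spans the kernel of $\Delta+\lambda_k$; by self--adjointness this operator restricts to an invertible map $L$ from $\{v:\int_D v\varphi_k\,dx=0\}\cap W^{2,2}(D)\cap W_0^{1,2}(D)$ onto the orthogonal complement of $\varphi_k$ in $L^2(D)$, with bounded inverse (elliptic regularity). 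Letting $P$ be the orthogonal projection onto $\varphi_k$ and $Q=I-P$, the equation \eqref{3} splits into the auxiliary equation
\[
Lv=Q\bigl[f-g(t\varphi_k+v)\bigr]
\]
and the scalar bifurcation equation $\int_D g(t\varphi_k+v)\,\varphi_k\,dx=\int_D f\,\varphi_k\,dx$, which is exactly \eqref{4.1} projected onto $\varphi_k$.

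The key simplification is that $g$ is bounded: being continuous with finite limits at $\pm\infty$, we have $|g|\le M$ for some $M$. Hence, for each fixed $t$, the map $v\mapsto L^{-1}Q\bigl[f-g(t\varphi_k+v)\bigr]$ carries $L^2(D)$ into a ball of $W^{2,2}(D)$ whose radius depends only on $\|f\|_{L^2}$ and $M$; this ball is precompact in $L^2(D)$ by the Rellich--Kondrachov theorem, and the map is continuous. Schauder's fixed point theorem then yields, for every $t$, a solution $v=v_t$ of the auxiliary equation, with $\|v_t\|_{W^{2,2}}\le C$ uniformly in $t$. Thus the full $W^{2,2}$-regularity and an a priori bound on the range component come for free; the only coordinate not yet controlled is the kernel coordinate $t$.

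The heart of the matter is the scalar equation $h(t):=\int_D g(t\varphi_k+v_t)\,\varphi_k\,dx-\int_D f\,\varphi_k\,dx=0$, and the main obstacle is that the kernel direction carries neither compactness nor an automatic a priori bound --- this is precisely what condition \eqref{5} must supply. I would argue by a limiting computation: if $t\to+\infty$, then since $\|v_t\|_{W^{2,2}}$ is bounded, along a subsequence $v_t\to v$ in $L^2(D)$ and a.e., so $t\varphi_k+v_t\to+\infty$ a.e. on $\{\varphi_k>0\}$ and $\to-\infty$ a.e. on $\{\varphi_k<0\}$. Dominated convergence (using $|g|\le M$ and the limits $g(\pm\infty)$) gives
\[
\int_D g(t\varphi_k+v_t)\,\varphi_k\,dx\longrightarrow g(\infty)\int_{\varphi_k>0}\varphi_k\,dx+g(-\infty)\int_{\varphi_k<0}\varphi_k\,dx,
\]
which by the right inequality in \eqref{5} strictly exceeds $\int_D f\,\varphi_k\,dx$; hence $h(t)>0$ for $t$ large. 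Symmetrically, the left inequality in \eqref{5} forces $h(t)<0$ for $t$ large negative.

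It remains to produce a zero of $h$, and here lies the one genuine technical difficulty: since the auxiliary solution $v_t$ need not be unique, a continuous selection $t\mapsto v_t$ --- which would let the intermediate value theorem finish immediately --- is not guaranteed. I would circumvent this with Leray--Schauder degree. The estimates above confine the entire solution set of \eqref{3} to a fixed ball $\{|t|\le C_0,\ \|v\|_{W^{2,2}}\le C\}$, so one phrases \eqref{3} as a single compact perturbation of the identity and checks that the sign information from \eqref{5} forces the associated vector field to be nonvanishing on the boundary of a large ball, giving nonzero degree and hence a solution. (Equivalently, one may homotope $g$ to a suitable constant, keeping \eqref{5} valid along the homotopy.) Thus the two decisive ingredients are the uniform bound on the range component, from boundedness of $g$, and the sign of $h$ at $\pm\infty$, from the Landesman--Lazer condition \eqref{5}.
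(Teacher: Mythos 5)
Your Lyapunov--Schmidt setup, the uniform bound on the range component, and the sign analysis of $h(t)$ as $t\to\pm\infty$ all match what the paper does (it proves Theorem \ref{thm:1} as a special case of Theorem \ref{thm:3}; your dominated-convergence argument is a clean substitute for the paper's explicit $\epsilon$--$\delta$ decomposition in the $f\in L^2$ case). The divergence, and the genuine gap, is in your final step. First, a nonvanishing compact vector field on the boundary of a large ball does \emph{not} give nonzero degree; it only makes the degree well defined (an equation with no solutions at all has a nonvanishing field, and its degree is zero). To conclude existence you must homotope, within the region where the a priori bounds persist, to a reference map whose degree is known to be nonzero. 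The sign change of $h$ in the one-dimensional kernel direction is precisely what makes this possible---it gives Brouwer degree $\pm1$ for the reduced kernel map, as in the coincidence-degree treatments of Landesman--Lazer---but you never state this reduction, and it is the entire content of the step. Second, your parenthetical fallback, homotoping $g$ to a ``suitable constant'' while ``keeping \eqref{5} valid along the homotopy,'' cannot work: for constant $g$ one has $g(\infty)=g(-\infty)$, so the two outer members of \eqref{5} coincide and the strict inequalities in \eqref{5} are unsatisfiable. No constant nonlinearity satisfies the Landesman--Lazer condition, so \eqref{5} necessarily fails somewhere along that homotopy.

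The non-uniqueness difficulty that pushed you toward degree theory is, moreover, self-inflicted, and the paper's proof shows how to avoid it. Instead of solving the auxiliary equation exactly for each $t$ (a nonlinear problem, hence with possibly many solutions $v_t$ and no continuous selection), the paper defines the map $T:(\eta_k,V)\to(\xi_k,U)$ by inserting the \emph{input} $(\eta_k,V)$ into the nonlinearity: $U$ is then the unique solution in $\varphi_k^{\perp}$ of the \emph{linear} problem \eqref{8}, and the scalar equation is recast as the update \eqref{8.1},
\[
\xi_k=\eta_k+A_k-\int_D g(\eta_k\varphi_k+U)\varphi_k\,dx \,.
\]
Your own sign analysis then shows $\xi_k<\eta_k$ for $\eta_k$ large positive and $\xi_k>\eta_k$ for $\eta_k$ large negative, so $T$ maps a sufficiently large ball of $R\times\varphi_k^{\perp}$ into itself, and a single application of Schauder's fixed point theorem to the joint map finishes the proof---no selection of $v_t$, and no degree theory, is needed.
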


We shall prove the sufficiency part under  a condition on $g(u)$, which
is more general than \eqref{4}. This condition had originated in  Lazer and
 Leach \cite{L1}, and it turned out to be appropriate when studying systems
(see the next section).
We assume that $g(u) \in C(R)$ is bounded on $R$, and there exist numbers
$c$, $d$, $C$ and $D$, with  $c<d$ and $C<D$, such that
\begin{gather} \label{20}
g(u) > D \quad \text{for $u > d$} \,, \\
\label{21}
g(u) < C \quad \text{for $u < c$} \,.
\end{gather}
Define
\[
L_2=D \int _{\varphi _k>0} \varphi _k \, dx+C \int _{\varphi _k<0} \varphi _k \, dx, \quad
 L_1=C \int _{\varphi _k>0} \varphi _k \, dx+D \int _{\varphi _k<0} \varphi _k \, dx \,.
\]
Observe that $L_2>L_1$, because $D>C$. We shall denote by $\varphi _k^{\perp}$
the subspace of $L^2(D)$, consisting of functions satisfying \eqref{2}.

\begin{theorem}\label{thm:3}
Assume that $\lambda _k$ is a simple eigenvalue, while $g(u) \in C(R)$ is
 bounded on $R$, and  satisfies \eqref{20} and \eqref{21}. Then for any
 $f(x) \in L^{2}(D)$ satisfying
\begin{equation} \label{22}
L_1<\int _D f(x) \varphi _k \, dx<L_2 \,,
\end{equation}
 problem \eqref{3} has a solution $u(x) \in W^{2,2}(D) \cap W_0^{1,2}(D)$.
\end{theorem}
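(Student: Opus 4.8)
The plan is to prove existence by a Lyapunov--Schmidt reduction combined with Schauder's fixed point theorem, in the spirit of Williams' shorter argument. Since $\lambda_k$ is simple, I normalize $\int_D\varphi_k^2\,dx=1$ and split $L^2(D)=\mathrm{span}\{\varphi_k\}\oplus\varphi_k^{\perp}$, writing every $u$ as $u=s\varphi_k+v$ with $s=\int_D u\varphi_k\,dx$ and $v\in\varphi_k^{\perp}$. Let $Q$ be the orthogonal projection onto $\varphi_k^{\perp}$. By the Fredholm alternative \eqref{2} the operator $\Delta+\lambda_k$ is invertible from $W^{2,2}\cap W_0^{1,2}\cap\varphi_k^{\perp}$ onto $\varphi_k^{\perp}$; call its inverse $K$, which is compact from $\varphi_k^{\perp}$ into $L^2(D)$ by elliptic regularity and the compact embedding $W^{2,2}\hookrightarrow L^2$. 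Projecting \eqref{3} onto $\varphi_k^{\perp}$ gives the auxiliary equation $\Delta v+\lambda_k v+Q\,g(u)=Qf$, equivalently $v=KQ(f-g(u))$; projecting onto $\varphi_k$ gives the scalar (bifurcation) equation $\int_D g(u)\varphi_k\,dx=\beta$, where $\beta:=\int_D f\varphi_k\,dx$. Because $g$ is bounded, the Nemytskii operator $u\mapsto g(u)$ is bounded and continuous on $L^2(D)$, so $\|KQ(f-g(u))\|_{L^2}\le\rho_0$ for a constant $\rho_0$ independent of $u$.

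Next I would fix a large $R>0$ and define a map $\Phi$ on the closed convex set $\mathcal C_R=\{s\varphi_k+v:|s|\le R,\ \|v\|_{L^2}\le\rho_0\}$ by $\Phi(u)=\tau\varphi_k+KQ(f-g(u))$, with the truncated scalar component $\tau=\max\{-R,\min\{R,\,s+\beta-\int_D g(u)\varphi_k\,dx\}\}$. Then $\Phi$ is continuous and compact (the orthogonal part lands in a precompact subset of $L^2$ coming from $K$, the scalar part in the compact interval $[-R,R]$), and it maps $\mathcal C_R$ into itself; Schauder's theorem yields a fixed point $u^{*}=\tau^{*}\varphi_k+v^{*}$. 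At a fixed point the orthogonal equation $v^{*}=KQ(f-g(u^{*}))$ holds automatically, and if $\tau^{*}$ lies in the open interval $(-R,R)$ then the truncation is inactive, forcing $\int_D g(u^{*})\varphi_k\,dx=\beta$; together these say that $u^{*}$ solves \eqref{3} and belongs to $W^{2,2}\cap W_0^{1,2}$. Thus everything reduces to excluding the two boundary cases $\tau^{*}=\pm R$.

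The heart of the proof is the estimate that drives the scalar equation, and this is where \eqref{22} enters. I claim that
\[
\liminf_{s\to+\infty}\,\inf_{v\in\mathcal W}\int_D g(s\varphi_k+v)\varphi_k\,dx\ \ge\ L_2,
\qquad
\limsup_{s\to-\infty}\,\sup_{v\in\mathcal W}\int_D g(s\varphi_k+v)\varphi_k\,dx\ \le\ L_1,
\]
where $\mathcal W=\{KQ(f-g(u)):u\in\mathcal C_R\}$ is precompact in $L^2(D)$. Granting this, for $R$ large the value of $\int_D g(u^{*})\varphi_k\,dx$ at $s=R$ exceeds $\beta$ (since $\beta<L_2$), so $s+\beta-\int_D g(u^{*})\varphi_k\,dx<R=s$ and the upper truncation cannot be active; symmetrically $\beta>L_1$ rules out $\tau^{*}=-R$. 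Hence $\tau^{*}\in(-R,R)$ and $u^{*}$ is the desired solution. To prove the claim I would argue by contradiction along sequences $s_n\to+\infty$, $v_n\in\mathcal W$: by precompactness $v_n\to v_{*}$ in $L^2$ and, along a subsequence, a.e., and since an eigenfunction vanishes only on a null set (unique continuation), $s_n\varphi_k(x)+v_n(x)\to+\infty$ a.e. on $\{\varphi_k>0\}$ and $\to-\infty$ a.e. on $\{\varphi_k<0\}$; then \eqref{20} gives $\liminf g\ge D$ on the first set and \eqref{21} gives $\limsup g\le C$ on the second, and Fatou's lemma applied to the $L^2$-dominated integrands yields the lower bound $L_2$. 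The analogous computation at $-\infty$ gives $L_1$. I expect this uniform-in-$v$ passage to the limit to be the main obstacle: the orthogonal components $v$ are controlled only in $L^2$, not pointwise, so the compact embedding $W^{2,2}\hookrightarrow L^2$ and the a.e. positivity and negativity of $\varphi_k$ are essential for turning the pointwise one-sided bounds \eqref{20} and \eqref{21} into the integral gap $L_1<\beta<L_2$ demanded by \eqref{22}.
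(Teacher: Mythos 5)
Your proposal is correct and follows the same overall architecture as the paper's proof: a Lyapunov--Schmidt splitting along $\varphi_k$, a Schauder fixed point for a map whose orthogonal component solves the projected linear equation and whose scalar component implements the bifurcation equation, and a Landesman--Lazer-type asymptotic estimate showing that $\int_D g(s\varphi _k+v)\varphi _k\,dx$ exceeds $L_2$ (falls below $L_1$) as $s\to +\infty$ (as $s\to -\infty$), uniformly over the admissible orthogonal parts. Two implementation choices differ. First, you truncate the scalar update to $[-R,R]$ and then rule out boundary fixed points, whereas the paper keeps the untruncated update $\xi _k=\eta _k+A_k-\int_D g(\eta _k\varphi _k+U)\varphi _k\,dx$ and checks directly that a large ball is mapped into itself; these devices are interchangeable. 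Second, and more substantively, for the key uniform limit --- the delicate step when $f$ is only in $L^2(D)$ --- you invoke precompactness of the set $\mathcal{W}$ of orthogonal components (coming from compactness of the resolvent $K$), extract a.e.\ convergent subsequences, and apply Fatou's lemma on $\{\varphi _k>0\}$ and $\{\varphi _k<0\}$ separately; the paper instead uses only the uniform $L^2$ bound on $U$ and runs a quantitative measure decomposition (a small set where $0<\varphi _k<\delta$, a Chebyshev estimate on $\{|U|>\eta _k\varphi _k/2\}$, and the remaining set where $g>D$), arriving at the explicit lower bound $L_2-8\epsilon G$. Your compactness-plus-Fatou route is cleaner and avoids the $\epsilon$-bookkeeping, at the cost of using compactness of $K$ rather than just the a priori bound, and it also lets you dispense with the paper's separate preliminary treatment of the easier case $f\in L^{\infty}(D)$. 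Both arguments are valid; no gap.
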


\begin{proof}
Normalize $\varphi _k(x)$, so that $\int _D \varphi ^2_k(x) \, dx=1$. Denoting
 $A_k=\int _D f(x) \varphi _k \, dx$, we decompose $f(x)=A_k \varphi _k (x)+e(x)$,
with $e(x) \in \varphi _k^{\perp}$ (where $\varphi _k^{\perp}$ is a subspace of
$L^2(D)$). Similarly, we decompose the solution $u(x)=\xi _k \varphi _k (x)+U(x)$,
 with $U(x) \in \varphi _k^{\perp}$. We rewrite \eqref{4.1}, and then \eqref{3}, as
\begin{gather} \label{6}
\int _D g(\xi _k \varphi _k (x)+U(x)) \varphi _k(x) \, dx=A_k \,, \\
\label{7} \begin{gathered}
  \Delta U+\lambda_k U=-g(\xi _k \varphi _k +U)+\varphi _k \int _D g(\xi _k \varphi _k +U )
\varphi _k\, dx+e, \quad x\in D \\
    U=0 \quad \text{on } \partial D \,.
\end{gathered}
\end{gather}
Equations \eqref{6} and \eqref{7} constitute the classical Lyapunov-Schmidt
 reduction of the problem \eqref{3}. To solve this system, we set up a
 map $T:  (\eta _k,V) \to (\xi _k,U)$, taking the space
$R \times \varphi _k^{\perp}$ into itself, by solving the equation
\begin{equation}\label{8}
\begin{gathered}
 \Delta U+\lambda_k U=-g(\eta _k \varphi _k +V)+\varphi _k \int _D g(\eta _k \varphi _k +V )
\varphi _k \, dx+e, \; x\in D \\
  U=0 \quad \text{on } \partial D \,,
\end{gathered}
\end{equation}
for $U$, and then setting
\begin{equation} \label{8.1}
\xi _k=\eta _k +A_k- \int _D g(\eta _k \varphi _k (x)+U(x)) \varphi _k(x) \, dx \,.
\end{equation}
The right hand side of \eqref{8} is orthogonal to $\varphi _k$, and so by the
Fredholm alternative we can find infinitely many solutions $U=U_0+c \varphi _k$.
Then we can choose $c$, so that $U \in \varphi _k^{\perp}$.

Assume first that $f(x) \in L^{\infty}(D)$. By the elliptic theory,
 we can estimate $\|U\|_{W^{2,p}(D)}$ by the $L^p$ norm of the right hand
 side of \eqref{8} plus $\|U\|_{L^p(D)}$, for any $p>1$. Since the homogeneous
equation, corresponding to \eqref{8}, has only the trivial solution in
$\varphi _k^{\perp}$, the $\|U\|_{L^p(D)}$ term can be dropped, giving us a
uniform estimate of $\|U\|_{W^{2,p}(D)}$. By the Sobolev embedding, for
some constant $c>0$ (for $p$ large enough)
\begin{equation} \label{9}
\|U\|_{L^{\infty}(D)} \leq c \quad \text{uniformly in }
(\eta _k,V) \in R \times \varphi _k^{\perp} \,.
\end{equation}
This implies that if $\eta _k$ is large  and positive, the integral in \eqref{6}
is greater than  $L_2$. When $\eta _k$ is large in absolute value and negative,
the integral in \eqref{6} is smaller than $L_1$. By our condition \eqref{22},
  it follows that for $\eta _k$ large and positive,  $\xi _k<\eta _k$, while
for $\eta _k$ large in absolute value and negative,  $\xi _k>\eta _k$.
Hence, we can find a large $N$, so that if $\eta _k \in (-N,N)$,  then
$\xi _k \in (-N,N)$. The map $T: (\eta _k,V) \to (\xi _k,U)$ is a continuous
and compact map, taking a sufficiently large ball of $R \times \varphi _k^{\perp}$
into itself. By Schauder's fixed point theorem (see e.g.,  Nirenberg \cite{N})
 it has a fixed point, which gives us a solution of the problem \eqref{3}.
(A fixed point of \eqref{8.1} is a solution of \eqref{6}.)

In case $f(x) \in L^{2}(D)$,  a little more care is needed to show that the
integral in \eqref{8.1} is greater (smaller) than $A_k$, for $\eta _k$
positive (negative) and large. Elliptic estimates give us
\begin{equation}
\label{9.1}
\|U\|_{L^{2}(D)} \leq c \quad \text{uniformly in }
(\eta _k,V) \in R \times \varphi _k^{\perp} \,.
\end{equation}
Set $G=\sup _{x \in D, \; u \in R } |g(u) \varphi _k(x)|$, and decompose
\begin{equation}
\label{9.2}
\int _D g(\eta _k \varphi _k (x)+U(x)) \varphi _k(x) \, dx= \int _{\varphi _k >0} g  \varphi _k \, dx+ \int _{\varphi _k <0} g  \varphi _k \, dx \,.
\end{equation}
The first integral we decompose further, keeping the same integrand,
\[
\int _{\varphi _k >0}  \, dx=\int _{0<\varphi _k <\delta}  \, dx
+ \int _{A_2}  \, dx+ \int _{A_3}  \, dx \equiv I_1+I_2+I_3 \,,
\]
with $A_2=(\varphi _k >\delta)  \cap  \left( |U|>\frac{\eta _k \varphi _k}{2} \right)$,
and $A_3=(\varphi _k >\delta)  \cap  \left( |U|<\frac{\eta _k \varphi _k}{2} \right)$.
Given any $\epsilon$, we fix $\delta$ so that the measure of the set
$\{ 0<\varphi _k(x) <\delta \}$ is less than  $\epsilon$. Then
 $|I_1| < \epsilon G$. In $I_2$ we integrate over the set, where
$|U|>\frac{\eta _k \delta}{2}$. Since $U$ is bounded in $L^2$ uniformly in $\eta _k$,
the measure of this set will get smaller than $\epsilon$ for $\eta _k$ large,
 and then $|I_2| < \epsilon G$. In $I_3$ we have $g(u)>D$ for $\eta _k$ large,
and we integrate over the subset of the  domain $D_+=\{x : \varphi _k(x)>0 \}$,
 whose measure is smaller than that of $D_+$ by no more than $2 \epsilon$.
 Hence $I_3>D \int _{\varphi _k >0} \varphi _k(x) \, dx-2 \epsilon G $, and then
\[
\int _{\varphi _k >0} g\varphi _k \, dx>D \int _{\varphi _k >0}  \varphi _k(x) \, dx-4 \epsilon G \,.
\]
 Proceeding similarly with the second integral in \eqref{9.2}, we conclude that
\[
\int _D g(\eta _k \varphi _k (x)+U(x)) \varphi _k(x) \, dx>L_2-8\epsilon G >A_k \,,
\]
if $\epsilon$ is small, which can be achieved for $\eta _k>0$ and large.
Similarly, we show that this integral is smaller than $A_k$ for $\eta _k<0$ and large.
\end{proof}

In case $\lambda  _k$ has a multidimensional eigenspace, a generalization of
 Landesman and  Lazer \cite{L} result  follows by a similar argument,
under a suitable condition. This was observed by S.A. Williams \cite{W},
back in 1970. Apparently no such examples in the PDE case for the
 Williams \cite{W} condition were ever given. We remark that  Williams \cite{W}
contained  a proof of   Landesman and  Lazer \cite{L} result, which is
similar to the one above, see also a recent paper of  Hastings and  McLeod \cite{H}.
Our proof below is a little shorter than in \cite{W}.

\begin{theorem}[\cite{W}] \label{thm:10}
Assume that $g(u)$ satisfies \eqref{4}, $f(x) \in L^2(D)$, while for any $w(x)$
belonging to the eigenspace of $\lambda _k$
\begin{equation} \label{9.4}
\int _D f(x) w(x) \, dx<g(\infty) \int _{w>0} w \, dx
+g(-\infty) \int _{w<0} w \, dx \,.
\end{equation}
Then  problem \eqref{3} has a solution. Condition \eqref{9.4} is also necessary
for the existence of solutions.
\end{theorem}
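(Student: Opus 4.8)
The plan is to repeat the Lyapunov--Schmidt reduction and Schauder argument from the proof of Theorem~\ref{thm:3}, now taking the kernel to be the whole $m$-dimensional eigenspace $N$ of $\lambda_k$, which is where multidimensionality enters. Fix an $L^2(D)$-orthonormal basis $\varphi_k^1,\dots,\varphi_k^m$ of $N$, let $P$ be the $L^2$-orthogonal projection onto $N$, and let $N^\perp$ be its complement (the analog of $\varphi_k^\perp$). Writing $A_i=\int_D f\varphi_k^i\,dx$, decompose $f=\sum_i A_i\varphi_k^i+e$ with $e\in N^\perp$, and seek $u=w+U$ with $w=\sum_i\xi_i\varphi_k^i\in N$ and $U\in N^\perp$. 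Then \eqref{3} splits, as in \eqref{6}--\eqref{7}, into the $m$ bifurcation equations $\int_D g(w+U)\varphi_k^i\,dx=A_i$ ($i=1,\dots,m$) together with the auxiliary equation
\[
\Delta U+\lambda_k U=-(I-P)\,g(w+U)+e,\qquad U=0\ \text{on }\partial D .
\]

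First I would solve the auxiliary equation: its right-hand side lies in $N^\perp$, so the Fredholm alternative produces a solution $U\in N^\perp$, and the elliptic estimates used for \eqref{9} and \eqref{9.1} bound $\|U\|$ uniformly, since $g$ is bounded. This reduces \eqref{3} to finding a zero of the finite-dimensional field $\Phi:\mathbb R^m\to\mathbb R^m$ with $\Phi_i(\eta)=\int_D g(w_\eta+U)\varphi_k^i\,dx-A_i$, where $w_\eta=\sum_i\eta_i\varphi_k^i$. Exactly as in \eqref{8}--\eqref{8.1} I set up the continuous, compact map $T:(\eta,V)\mapsto(\xi,U)$ with $\xi=\eta-\Phi(\eta)$; a fixed point satisfies the bifurcation equations and hence solves \eqref{3}. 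Because $g$ is bounded, $\Phi$ is bounded, say $|\Phi|\le M$, and from $|\xi|^2=|\eta|^2-2\langle\eta,\Phi(\eta)\rangle+|\Phi(\eta)|^2$ one sees that $T$ carries a large ball into itself as soon as $\langle\eta,\Phi(\eta)\rangle\ge c_0|\eta|$ on large spheres, for some $c_0>0$.

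The crux is the behaviour of $\Phi$ at infinity. Writing $\eta=t\hat\eta$ with $|\hat\eta|=1$ and $\hat w=\sum_i\hat\eta_i\varphi_k^i$, the computation in the $L^2$ part of the proof of Theorem~\ref{thm:3} (the uniform $L^2$ bound on $U$ together with the splitting into $I_1,I_2,I_3$) gives, for each fixed direction,
\[
\tfrac1t\,\langle\eta,\Phi(\eta)\rangle\to g(\infty)\int_{\hat w>0}\hat w\,dx+g(-\infty)\int_{\hat w<0}\hat w\,dx-\int_D f\,\hat w\,dx ,
\]
whose right-hand side is \emph{positive} precisely by hypothesis \eqref{9.4} applied to $w=\hat w$. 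The main obstacle is to make this convergence \emph{uniform} in the direction $\hat\eta$: as $\hat\eta$ moves, the nodal set $\{\hat w=0\}$ moves with it, and the convergence degrades near it. Here finite-dimensionality of $N$ is decisive, since the unit sphere of $N$ is compact; a contradiction/subsequence argument then upgrades the pointwise-in-direction limit to the uniform lower bound $\langle\eta,\Phi(\eta)\rangle\ge c_0|\eta|$ for $|\eta|$ large. With this, $|\xi|<|\eta|$ on a large sphere, and Schauder's theorem yields a solution.

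Finally, necessity is immediate. Testing \eqref{3} against any $w\in N$ and integrating by parts gives $\int_D g(u)\,w\,dx=\int_D f\,w\,dx$; since by \eqref{4} one has $g(u)w<g(\infty)w$ where $w>0$ and $g(u)w<g(-\infty)w$ where $w<0$, integration produces the strict inequality \eqref{9.4}.
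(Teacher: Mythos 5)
Your proposal is correct and follows essentially the same route as the paper: the same Lyapunov--Schmidt splitting over the full eigenspace, the same map $T$ with $\xi=\eta-\Phi(\eta)$, and the same expansion of $|\xi|^2$ reduced to the lower bound $\langle\eta,\Phi(\eta)\rangle\geq c_0|\eta|$ on large spheres, which is exactly the inequality $\sum_i\eta_i(A_i-I_i)<-\epsilon\sqrt{\eta_1^2+\cdots+\eta_m^2}$ in the paper. Your explicit remark that the uniformity of this bound over directions $\hat\eta$ must be secured by compactness of the unit sphere of the eigenspace (plus a subsequence argument) fills in a step the paper leaves implicit, and your necessity argument is the standard one and is fine.
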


\begin{proof}
Let $E \subset L^2(D)$ denote the eigenspace of $\lambda _k$, and let $\varphi _1$,
 $\varphi _2,\dots,\varphi _m$ be its orthogonal basis, with
 $\int _D \varphi ^2_i(x) \, dx=1$, $1 \leq i \leq m$.  Denoting
$A_i=\int _D f(x) \varphi _i \, dx$, we decompose
$f(x)=\sum_{i=1}^m  A_i \varphi _i (x)+e(x)$, with $e(x) \in E ^{\perp}$
 (where $E ^{\perp}$ is a subspace of $L^2(D)$). Similarly, we decompose the
solution $u(x)=\sum_{i=1}^m  \xi_i \varphi _i (x)+U(x)$, with $U(x) \in E^{\perp}$.
We have
\begin{gather} \label{9.41}
\int _D g(\sum_{i=1}^m  \xi_i \varphi _i+U(x)) \varphi _i(x) \, dx=A_i,
\quad i=1,\ldots,m \,, \\
\label{9.42}
\begin{gathered}
 \Delta U+\lambda_k U=-g(\sum_{i=1}^m  \xi_i \varphi _i +U)+\sum_{i=1}^m \varphi _i
\int _D g(\sum_{i=1}^m  \xi_i \varphi _i+U ) \varphi _i\, dx+e \\
   U=0 \quad \text{on } \partial D \,.
\end{gathered}
\end{gather}
Equations \eqref{9.41} and \eqref{9.42} constitute the classical
Lyapunov-Schmidt reduction of problem \eqref{3}. To solve this system,
we set up a map $T: (\eta _1,\ldots, \eta _m,V) \to (\xi _1,\ldots, \xi _ m,U)$,
 taking the space $R^m \times E^{\perp}$ into itself, by solving the equation
\begin{equation} \label{9.43}
\begin{gathered}
\Delta U+\lambda_k U=-g(\sum_{i=1}^m  \eta _i \varphi _i +V
)+\sum_{i=1}^m\varphi _i \int _D g(\sum_{i=1}^m  \eta _i \varphi _i +V ) \varphi _i \, dx+e \\
  U=0 \quad \text{on } \partial D \,,
\end{gathered}
\end{equation}
for $U$, and then setting
\begin{equation} \label{9.44}
\xi _i=\eta _i +A_i- \int _D g(\sum_{i=1}^m \eta _i \varphi _i (x)+U(x)) \varphi _i(x)
\, dx, \quad i=1,\ldots,m \,.
\end{equation}
The right-hand side of \eqref{9.43} is orthogonal to all $\varphi _i$, and so by
the Fredholm alternative we can find infinitely many solutions
$U=U_0+\sum_{i=1}^m c_i \varphi _i$. Then we can choose $c_i$, so that $U \in E^{\perp}$.

As before, we get a bound on $\|U\|_{L^{2}(D)}$, uniformly in
$(\eta _1,\ldots, \eta _m,V)$. Denoting
$I_i=\int _D g(\sum_{i=1}^m \eta _i \varphi _i (x)+U(x)) \varphi _i(x) \, dx$, we have
\begin{equation} \label{9.45}
\sum_{i=1}^m \xi _i ^2=\sum_{i=1}^m \eta _i ^2+2\sum_{i=1}^m
\eta _i(A_i-I_i)+\sum_{i=1}^m (A_i-I_i)^2 \,.
\end{equation}
Denoting $w=\sum_{i=1}^m \frac{\eta _i}{\sqrt{\eta _1^2+\cdots +\eta _m^2}} \varphi _i$,
we have
\begin{align*}
\sum_{i=1}^m \eta _i(A_i-I_i)
&=\sqrt{\eta _1^2+\cdots +\eta _m^2}
\Big(\int _D f w \, dx-\int _D g(\sqrt{\eta _1^2+\cdots +\eta _m^2} w+U) w \, dx\Big) \\
& <-\epsilon  \sqrt{\eta _1^2+\cdots +\eta _m^2} \,,
\end{align*}
for some $\epsilon >0$, when $\sqrt{\eta _1^2+\cdots +\eta _m^2}$ is large,
in view of  condition \eqref{9.4}.
If we denote by $h$ an upper bound on all of $(A_i-I_i)^2$, then from \eqref{9.45}
\[
\sum_{i=1}^m \xi _i ^2<\sum_{i=1}^m \eta _i ^2-2\epsilon  \sqrt{\eta _1^2+
\cdots +\eta _m^2}+mh<\sum_{i=1}^m \eta _i ^2 \,,
\]
for  $\sqrt{\eta _1^2+\cdots +\eta _m^2}$ large.
Then the map $T$ is a compact and continuous map of a sufficiently large ball
in $R^m \times E^{\perp}$ into itself, and we have a solution by Schauder's
fixed point theorem.
\end{proof}

Condition \eqref{9.4} implies the Landesman and  Lazer \cite{L} condition \eqref{5}.
Indeed, if $\lambda _k$ is a simple eigenvalue, then $w=b \varphi _k$. If the constant
$b>0$ ($b<0$), then \eqref{9.4} implies the right (left) inequality in \eqref{5}.

In the ODE case a famous example of resonance with a two-dimensional eigen\-space
is the result of  Lazer and  Leach \cite{L1}, which we describe next.
We seek to find $2 \pi$ periodic solutions $u=u(t)$ of
\begin{equation} \label{10}
u''+n^2u +g(u)=f(t) \,,
\end{equation}
with a given continuous $2 \pi$ periodic $f(t)=f(t+2\pi)$, and $n$
is a positive integer. The linear part of this problem is at resonance, because
\[
u''+n^2u=0
\]
has a two-dimensional $2 \pi$ periodic null space, spanned by
$\cos nt$ and $\sin nt$. The following result is included in  Lazer and
 Leach \cite{L1}.

\begin{theorem}[\cite{L1}] \label{thm:2}
Assume that $g(u) \in C(R)$ satisfies \eqref{4}. Define
\begin{equation}
\label{***}
A=\int_0^{2\pi} f(t) \cos nt \, dt, \quad B=\int_0^{2\pi} f(t) \sin nt \, dt \,.
\end{equation}
Then a $2 \pi$ periodic solution of \eqref{10} exists if and only if
\begin{equation} \label{11}
\sqrt{A^2+B^2}<2\left(g(\infty)-g(-\infty) \right) \,.
\end{equation}
\end{theorem}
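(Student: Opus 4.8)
The plan is to recognize \eqref{10} as an instance of the resonance problem treated in Theorem \ref{thm:10}. The operator $Lu=u''+n^2u$ acting on $2\pi$-periodic functions is self-adjoint, obeys the Fredholm alternative, and has the two-dimensional null space $E=\mathrm{span}\{\cos nt,\sin nt\}$. The Lyapunov--Schmidt reduction and Schauder fixed point argument of Theorem \ref{thm:10} transfer to this ODE setting essentially verbatim, with the elliptic estimate replaced by the elementary uniform $L^2$ bound for $L$ on $E^{\perp}$. Thus sufficiency will follow once I show that the abstract hypothesis \eqref{9.4}, specialized to $E$, is equivalent to the concrete inequality \eqref{11}; this reduction is where the special structure of the trigonometric eigenfunctions enters, and it is the heart of the matter.

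To carry out the reduction I would parametrize a general nonzero element of the eigenspace as $w=\rho\cos(nt-\phi)$ with $\rho>0$ and $\phi\in[0,2\pi)$, since every combination $a\cos nt+b\sin nt$ has this form. For the left-hand side of \eqref{9.4}, expanding the cosine and using the definitions \eqref{***} gives $\int_0^{2\pi}fw\,dt=\rho(A\cos\phi+B\sin\phi)=\rho\sqrt{A^2+B^2}\,\cos(\phi-\psi)$ for a suitable phase $\psi$ determined by $(A,B)$. For the right-hand side, I first note that $\int_0^{2\pi}w\,dt=0$, so $\int_{w<0}w\,dt=-\int_{w>0}w\,dt$, and the right-hand side of \eqref{9.4} collapses to $(g(\infty)-g(-\infty))\int_{w>0}w\,dt$.

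The key computation is then $\int_{w>0}w\,dt=\rho\int_0^{2\pi}[\cos(nt-\phi)]^+\,dt=2\rho$, where the integral of the positive part is independent of both the phase $\phi$ and the integer $n$, because the nonnegative integrand is integrated over an integer number of its periods; a direct substitution reduces it to $\int_{-\pi/2}^{\pi/2}\cos s\,ds=2$. Hence the right-hand side of \eqref{9.4} equals $2\rho(g(\infty)-g(-\infty))$, and after dividing by $\rho>0$ the condition \eqref{9.4} reads $\sqrt{A^2+B^2}\,\cos(\phi-\psi)<2(g(\infty)-g(-\infty))$ for every $\phi$. Demanding this in all directions $\phi$ — equivalently, choosing $\phi=\psi$, where $\cos(\phi-\psi)=1$ is maximal — is exactly \eqref{11}, so \eqref{9.4} and \eqref{11} are equivalent and sufficiency follows from Theorem \ref{thm:10}.

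For necessity I would argue directly: if $u$ is a $2\pi$-periodic solution, multiplying \eqref{10} by any $w\in E$ and integrating, the self-adjointness of $L$ annihilates the linear terms and leaves $\int_0^{2\pi}g(u)w\,dt=\int_0^{2\pi}fw\,dt$. Applying \eqref{4} separately on $\{w>0\}$ and $\{w<0\}$ yields the strict inequality \eqref{9.4}, and the computation above then forces \eqref{11}. I expect the only delicate points to be bookkeeping: verifying that the functional-analytic apparatus of Theorem \ref{thm:10} (the uniform $L^2$ bound on $U$ and compactness of the map $T$) genuinely carries over to the periodic problem, and keeping every inequality strict so that the supremum over the phase $\phi$ produces the \emph{sharp} constant $2(g(\infty)-g(-\infty))$ rather than a merely non-strict bound. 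The real mathematical content is the phase- and frequency-independence of $\int_0^{2\pi}[\cos(nt-\phi)]^+\,dt$, which is precisely the special property of sine and cosine that makes \eqref{10} an example for Theorem \ref{thm:10}.
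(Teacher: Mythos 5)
Your proposal is correct and is essentially the paper's own approach: the equivalence of \eqref{9.4} with \eqref{11} via the phase parametrization $w=\rho\cos(nt-\phi)$ and the computation $\int_0^{2\pi}\left[\cos(nt-\phi)\right]^+dt=2$ is exactly Lemma \ref{lma:2} and the remark following it, while the transferred Lyapunov--Schmidt/Schauder machinery (including the estimate $\xi^2+\eta^2<a^2+b^2-2\mu\sqrt{a^2+b^2}+2h$) is exactly the paper's direct proof of Theorem \ref{thm:2} in the first appendix. The only organizational difference is that the paper does not formally invoke Theorem \ref{thm:10} (which is stated for the Dirichlet Laplacian on a domain) but reruns the same fixed-point argument concretely in the periodic setting, using the $L^\infty$ bound \eqref{18} on $U$ where you propose an $L^2$ bound.
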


The following elementary lemma is easy to prove.

\begin{lemma}\label{lma:2}
Consider a function $\cos (nt-\delta)$, with an integer $n$ and any real $\delta$.
Denote $P=\{t \in (0,2\pi) : \cos (nt-\delta)>0 \}$ and
 $N=\{t \in (0,2\pi) : \cos (nt-\delta)<0 \}$. Then
\[
\int_P \cos (nt-\delta) \, dt=2, \quad \int_N \cos (nt-\delta) \, dt=-2 \,.
\]
\end{lemma}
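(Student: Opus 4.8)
The plan is to reduce the claim to a single standard integral of the cosine over one of its periods, by combining the change of variables $\tau=nt-\delta$ with the periodicity of $\cos$. First I would record that $\cos(nt-\delta)$ is periodic in $t$ with period $2\pi/n$, so that $(0,2\pi)$ consists of exactly $n$ full periods. The elementary fact I want to exploit is that the integral of any $p$-periodic function over an interval of length equal to an integer multiple of $p$ does not depend on the left endpoint; in particular the phase shift $\delta$ is irrelevant, and it suffices to understand the contribution of a single period.

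Next I would substitute $\tau=nt-\delta$, $d\tau=n\,dt$, which carries $(0,2\pi)$ to the $\tau$-interval $(-\delta,2\pi n-\delta)$ of length $2\pi n$, that is, $n$ full periods of $\cos\tau$. This gives
\[
\int_P \cos(nt-\delta)\,dt=\frac{1}{n}\int_{\{\cos\tau>0\}}\cos\tau\,d\tau,
\]
the inner integral being taken over the part of the $\tau$-interval on which $\cos\tau>0$. By the periodicity remark this inner integral equals $n$ times its value over one period, namely $n\int_{-\pi/2}^{\pi/2}\cos\tau\,d\tau=n[\sin\tau]_{-\pi/2}^{\pi/2}=2n$. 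Hence $\int_P\cos(nt-\delta)\,dt=\frac1n\cdot 2n=2$, which is the first assertion.

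For the second assertion I would argue identically, integrating $\cos\tau$ over the set $\{\cos\tau<0\}$: over one period this contributes $\int_{\pi/2}^{3\pi/2}\cos\tau\,d\tau=[\sin\tau]_{\pi/2}^{3\pi/2}=-2$, so the same bookkeeping yields $-2$. Even more simply, since $\int_P+\int_N=\int_0^{2\pi}\cos(nt-\delta)\,dt=0$, the value of $\int_N$ is forced once $\int_P$ is known. There is no genuine obstacle in this lemma; the only point deserving a word of care is the independence of the answer from $\delta$, which is precisely where the hypothesis that $n$ is an integer---so that $(0,2\pi)$ is a whole number of periods---is used. (The zero set of $\cos(nt-\delta)$ is finite, hence of measure zero, so excluding it in the definition of $P$ and $N$ is immaterial.)
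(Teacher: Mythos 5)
Your proof is correct. The paper itself offers no proof of this lemma (it is simply labeled ``easy to prove''), and your argument --- substituting $\tau=nt-\delta$, using that the integrand restricted to $\{\cos\tau>0\}$ is $2\pi$-periodic so the phase shift and choice of left endpoint are irrelevant, and evaluating $\int_{-\pi/2}^{\pi/2}\cos\tau\,d\tau=2$ over each of the $n$ periods --- is exactly the standard computation the author had in mind, with the roles of the integrality of $n$ and the measure-zero zero set correctly identified.
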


We show next that Theorem  \ref{thm:2} of   Lazer and  Leach provides
an example to Theorem \ref{thm:10} of  Williams.
Indeed, any eigenfunction corresponding to $\lambda _n =n^2$ is of the form
$w=a \cos nt +b \sin nt$, $a,b \in R$, or $w =\sqrt{a^2+b^2} \cos (nt-\delta)$
for some $\delta$. The left hand side of \eqref{9.4} is $aA+bB$, while the
integral on the right is equal to
$2 \sqrt{a^2+b^2} \left(g(\infty)-g(-\infty) \right)$, in view of
Lemma \ref{lma:2}. We then rewrite \eqref{9.4} in terms of a scalar product
of two vectors
\[
\Big(\frac{a}{\sqrt{a^2+b^2}}, \frac{b}{\sqrt{a^2+b^2}} \Big) \cdot (A,B)<2
 \left(g(\infty)-g(-\infty) \right) \quad \text{for all $a$ and $b$}\,,
\]
which is equivalent to the  Lazer and Leach condition \eqref{11}.

Another perturbation of a harmonic oscillator at resonance was considered
by  Lazer and Frederickson \cite{FL}, and  Lazer \cite{L2}:
\begin{equation} \label{f1}
u''+g(u)u'+n^2u=f(t) \,.
\end{equation}
Here $f(t) \in C(R)$ satisfies $f(t+2\pi)=f(t)$ for all $t$,
$g(u) \in C(R)$, $n \geq 1$ is an integer.
Define $G(u)=\int_0^u g(t) \, dt$. We assume that the finite limits
$G(\infty)$ and  $G(-\infty)$ exist, and
\begin{equation} \label{f2}
G(-\infty)<G(u)<G(\infty)  \quad \text{for all } u \,.
\end{equation}

\begin{theorem}\label{thm:***}
Assume that \eqref{f2} holds, and let $A$ and $B$ be again defined by \eqref{***}.
Then the condition
\begin{equation} \label{f3}
\sqrt{A^2+B^2}<2n \left(G(\infty)-G(-\infty) \right)
\end{equation}
is necessary and sufficient for the existence of $2\pi$ periodic solution
of \eqref{f1}.
\end{theorem}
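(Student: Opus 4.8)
The whole argument rests on the identity $g(u)u'=\frac{d}{dt}G(u)$, which lets one treat the damping term as an exact derivative and recover a Landesman--Lazer structure with $G$ in the role of the nonlinearity and an extra factor $n$ coming from an integration by parts. \emph{Necessity.} If $u(t)$ is a $2\pi$ periodic solution, I would multiply \eqref{f1} by $\cos nt$ and integrate over $(0,2\pi)$; the contribution of $u''+n^2u$ vanishes after two integrations by parts (boundary terms cancel by periodicity), leaving $\int_0^{2\pi}\frac{d}{dt}G(u)\cos nt\,dt=A$. One more integration by parts gives $A=n\int_0^{2\pi}G(u)\sin nt\,dt$, and the same computation with $\sin nt$ gives $B=-n\int_0^{2\pi}G(u)\cos nt\,dt$, whence
\[
\frac{\sqrt{A^2+B^2}}{n}=\Big|\Big(\int_0^{2\pi}G(u)\cos nt\,dt,\ \int_0^{2\pi}G(u)\sin nt\,dt\Big)\Big|=\max_{\delta}\int_0^{2\pi}G(u)\cos(nt-\delta)\,dt.
\]
Splitting this last integral over the sets $P$ and $N$ of Lemma \ref{lma:2} and using $G(-\infty)<G(u(t))<G(\infty)$ from \eqref{f2}, together with $\int_P\cos(nt-\delta)\,dt=2$ and $\int_N\cos(nt-\delta)\,dt=-2$, yields $\int_0^{2\pi}G(u)\cos(nt-\delta)\,dt<2\big(G(\infty)-G(-\infty)\big)$ for every $\delta$; since the left side is continuous in $\delta$ on a compact set, its maximum is also strictly below this bound, which is precisely \eqref{f3}.

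\emph{Sufficiency.} I would produce a solution by a Lyapunov--Schmidt reduction patterned on the proof of Theorem \ref{thm:10}. Writing $u=a\cos nt+b\sin nt+U$ with $U$ orthogonal in $L^2(0,2\pi)$ to $\cos nt$ and $\sin nt$, I solve the auxiliary equation
\[
U''+n^2U=-g(u)u'+P\big(g(u)u'\big)+\big(f-Pf\big),
\]
where $P$ is the orthogonal projection onto the span of $\cos nt,\sin nt$; the right-hand side is orthogonal to the kernel, so $U$ is determined uniquely in the complement. By the integration by parts above, the remaining bifurcation equations are exactly $\int_0^{2\pi}G(u)\cos nt\,dt=-B/n$ and $\int_0^{2\pi}G(u)\sin nt\,dt=A/n$. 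I would then set up the map $T:(\tilde a,\tilde b,V)\mapsto(a,b,U)$ on $\mathbb{R}^2\times\{\cos nt,\sin nt\}^{\perp}$, solving the auxiliary equation for $U$ and putting $(a,b)=(\tilde a,\tilde b)+\big(\mathbf p-\mathbf\Phi\big)$, where $\mathbf p=(-B/n,A/n)$ and $\mathbf\Phi=\big(\int_0^{2\pi}G(u)\cos nt\,dt,\int_0^{2\pi}G(u)\sin nt\,dt\big)$, and apply Schauder's fixed point theorem as in Theorem \ref{thm:10}.

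The main obstacle, and what differs from the earlier proofs, is the \emph{a priori} bound on $U$: since the nonlinearity $g(u)u'$ carries a derivative, it cannot be bounded directly. Here I would use that $L=\frac{d^2}{dt^2}+n^2$ has constant coefficients and hence commutes with $\frac{d}{dt}$ on the kernel complement. Writing the leading part of $U$ as $-\frac{d}{dt}L^{-1}$ applied to the kernel-complement projection of $G(u)$, and using that $G$ is \emph{bounded} on $\mathbb{R}$ by \eqref{f2}, gives $\|L^{-1}G(u)\|_{W^{2,2}}\le c$ and thus $\|U\|_{H^1}\le c$, so that $\|U\|_{L^\infty}\le c$ uniformly in $(\tilde a,\tilde b,V)$; the projected and inhomogeneous terms are bounded and controlled by ordinary elliptic estimates. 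With this uniform bound, the argument closes as before: setting $\rho^2=a^2+b^2$ and $a\cos nt+b\sin nt=\rho\cos(nt-\psi)$, dominated convergence with Lemma \ref{lma:2} gives $\int_0^{2\pi}G(u)\cos(nt-\psi)\,dt\to 2\big(G(\infty)-G(-\infty)\big)$ \emph{uniformly in} $\psi$ (the sets where $|\cos(nt-\psi)|$ is small have uniformly small measure, and off them $u\to\pm\infty$ once $\rho$ dominates the bound on $U$). Since by \eqref{f3} this limit exceeds $|\mathbf p|=\sqrt{A^2+B^2}/n$, the quantity $(a,b)\cdot(\mathbf p-\mathbf\Phi)$ is negative for large $\rho$, so $T$ maps a sufficiently large ball into itself and has a fixed point, which is the desired $2\pi$ periodic solution. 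Thus everything hinges on the commuting-operator identity that turns the uniform control of $U$ into mere boundedness of $G$; the rest is the radial estimate of Theorem \ref{thm:10} combined with the uniform convergence of the bifurcation integrals.
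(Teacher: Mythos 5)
The paper does not actually prove Theorem \ref{thm:***}: it states the result and refers the proof to Lazer \cite{L2} (for $n=1$) and Korman--Li \cite{KL} (for $n\geq 1$), so there is no in-paper argument to compare against line by line. Judged on its own, your proposal is sound and is the natural extension of the paper's Lyapunov--Schmidt/Schauder template. The necessity half is complete: the identity $g(u)u'=\frac{d}{dt}G(u)$, two integrations by parts against $\cos nt$ and $\sin nt$, the reformulation of $\sqrt{A^2+B^2}/n$ as $\max_{\delta}\int_0^{2\pi}G(u)\cos(nt-\delta)\,dt$, and the strict bound from \eqref{f2} together with Lemma \ref{lma:2} correctly produce \eqref{f3}, including the appearance of the factor $n$ that distinguishes this from Theorem \ref{thm:2}. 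For sufficiency you have also isolated the one genuinely new difficulty --- that $g(u)u'$ is not a bounded nonlinearity --- and your resolution (rewrite the complement component as $U=-\frac{d}{dt}L^{-1}(I-P)G(u)+L^{-1}(I-P)f$, using that $L=\frac{d^2}{dt^2}+n^2$ and $\frac{d}{dt}$ are commuting Fourier multipliers on the kernel complement, so boundedness of $G$ yields a uniform $H^1$, hence $L^{\infty}$, bound on $U$) is exactly the right mechanism; the radial self-mapping estimate then runs as in Theorem \ref{thm:10}. Two small points you should tidy up: in the final estimate the inner product that must be driven negative is $(\tilde a,\tilde b)\cdot(\mathbf p-\mathbf\Phi)$, not $(a,b)\cdot(\mathbf p-\mathbf\Phi)$ (a notational slip, since $\rho$ must be the norm of the \emph{input} pair); and you should state explicitly that the map $T$ is defined on $\mathbb{R}^2\times L^2_n$ using only $G(w)$ (no derivative of $V$ is ever taken, which is what makes $T$ well defined and compact there), with a short bootstrap at the fixed point to recover a classical $2\pi$ periodic solution of \eqref{f1}. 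With those clarifications the argument is complete.
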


This result was proved in  Lazer \cite{L2} for $n=1$, and by  Korman and
 Li \cite{KL}, for $n \geq 1$. Observe that the condition for solvability
now depends on $n$, unlike the  Lazer and Leach condition \eqref{11}.
Also,  Theorem \ref{thm:***} does not carry over to boundary value problems,
see \cite{KL}, unlike the result of Lazer and  Leach.

Next, we discuss the result of  de Figueiredo and  Ni \cite{FN},
involving resonance at the principal eigenvalue.

\begin{theorem}[\cite{FN}] \label{thm:4}
Consider the problem
\begin{equation} \label{23}
\Delta u+\lambda_1 u+g(u)=e(x), \quad x\in D, \quad u=0 \quad \text{on }
\partial D \,.
\end{equation}
Assume that $e(x) \in L^{\infty}(D)$ satisfies $\int _D e(x) \varphi _1(x) \, dx=0$,
while the function $g(u) \in C(R)$  is a bounded function, satisfying
\begin{equation} \label{24}
ug(u)>0, \quad \text{for all $u \in R$} \,.
\end{equation}
Then the problem \eqref{23} has a solution $u(x) \in W^{2,p}(D) \cap W_0^{1,p}(D)$,
for any $p>1$.
\end{theorem}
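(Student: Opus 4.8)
The plan is to run the same Lyapunov--Schmidt reduction and Schauder fixed point scheme as in the proof of Theorem \ref{thm:3}, now at the principal eigenvalue $\lambda_1$. Since $\lambda_1$ is simple and its eigenfunction may be taken strictly positive in $D$, I normalize $\varphi_1$ by $\int_D\varphi_1^2\,dx=1$ and note that the hypothesis $\int_D e\varphi_1\,dx=0$ says precisely that the resonant component $A_1$ vanishes, so $e\in\varphi_1^\perp$. Writing $u=\xi\varphi_1+U$ with $U\in\varphi_1^\perp$, problem \eqref{23} is equivalent to the bifurcation equation $\int_D g(\xi\varphi_1+U)\varphi_1\,dx=0$ together with the auxiliary equation for $U$ obtained by projecting onto $\varphi_1^\perp$. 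As in \eqref{8}--\eqref{8.1} I would define the map $T\colon(\eta,V)\mapsto(\xi,U)$ on $R\times\varphi_1^\perp$ by solving the auxiliary equation for $U$ (its right-hand side is orthogonal to $\varphi_1$, so the Fredholm alternative applies and $U$ is normalized into $\varphi_1^\perp$) and setting $\xi=\eta-\int_D g(\eta\varphi_1+U)\varphi_1\,dx$.

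Since $g$ is bounded and $e\in L^\infty(D)$, the right-hand side of the auxiliary equation is bounded in $L^p(D)$ for every $p$; the elliptic estimate (the $\|U\|_{L^p(D)}$ term being absorbed because the homogeneous problem has only the trivial solution in $\varphi_1^\perp$) gives a uniform bound on $\|U\|_{W^{2,p}(D)}$, hence by Sobolev embedding $\|U\|_{L^\infty(D)}\le c$ uniformly in $(\eta,V)$, exactly as in \eqref{9}. Thus $T$ is continuous and compact, and it remains only to produce a large invariant ball, for which I must control the scalar component $\xi$.

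The main obstacle is the sign of the reduced integral $I(\eta,U)=\int_D g(\eta\varphi_1+U)\varphi_1\,dx$ for $|\eta|$ large: I claim $I>0$ once $\eta$ is large and positive, and $I<0$ once $\eta$ is large and negative. This is where the proof genuinely departs from Landesman--Lazer, since here $g$ is only assumed bounded with $ug(u)>0$, so it need not have limits at $\pm\infty$ nor a positive lower bound there, and a crude constant bound is unavailable. Instead I would exploit that $\varphi_1>0$ throughout the interior: because $\|U\|_{L^\infty}\le c$, for $\eta>0$ one has $\eta\varphi_1+U>0$ wherever $\varphi_1>c/\eta$, so $g(\eta\varphi_1+U)\varphi_1\ge 0$ there, while the set $\{\eta\varphi_1+U<0\}$ is contained in the thin boundary layer $\{\varphi_1<c/\eta\}$, on which $|g\varphi_1|\le M\varphi_1$ with $M=\sup|g|$. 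Hence
\[
I(\eta,U)\ \ge\ \int_{\varphi_1>c/\eta} g(\eta\varphi_1+U)\varphi_1\,dx\ -\ M\int_{\varphi_1<c/\eta}\varphi_1\,dx,
\]
and the delicate step is to verify that the strictly positive first term dominates the second as $\eta\to\infty$. This is precisely where the absence of an interior nodal set for the principal eigenfunction is essential --- for a higher eigenvalue $\varphi_k$ changes sign and the sign-definiteness of $g(\cdot)\varphi_k$ is lost, so the argument fails; this is the part requiring the most care. The symmetric estimate gives $I<0$ for $\eta$ large and negative.

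Granting the sign claim, the conclusion is routine and mirrors Theorem \ref{thm:3}: from $\xi=\eta-I(\eta,U)$ with $|I|\le M\int_D\varphi_1\,dx$ bounded, choosing $N$ large enough forces $\eta\in(-N,N)$ to map to $\xi\in(-N,N)$; together with the uniform bound $\|U\|_{L^\infty}\le c$ this makes $T$ carry a sufficiently large ball of $R\times\varphi_1^\perp$ into itself, and Schauder's fixed point theorem yields a fixed point, that is, a solution of \eqref{23}. Finally, since $e\in L^\infty(D)\subset L^p(D)$ and $g(u)$ is bounded, elliptic regularity places the solution in $W^{2,p}(D)\cap W_0^{1,p}(D)$ for every $p>1$, as claimed.
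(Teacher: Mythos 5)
Your overall scheme (Lyapunov--Schmidt at $\lambda_1$, the map $T$ with $\xi=\eta-\int_D g(\eta\varphi_1+U)\varphi_1\,dx$, the uniform bound on $U$, Schauder) coincides with the paper's, but the one step you yourself single out as ``delicate'' --- showing that $\int_{\varphi_1>c/\eta}g(\eta\varphi_1+U)\varphi_1\,dx$ dominates $M\int_{\varphi_1<c/\eta}\varphi_1\,dx$ --- is a genuine gap, and in fact your decomposition cannot be closed. Under the hypotheses \eqref{24} alone, $g$ may decay to $0$ at $\pm\infty$ (e.g.\ $g(u)=u/(1+u^2)$ or $g(u)=u e^{-u^2}$), so your ``strictly positive'' first term has no quantitative lower bound: the only region where $g(\eta\varphi_1+U)$ is bounded below by a fixed positive constant is the layer where $\eta\varphi_1+U$ stays in a fixed compact interval of $(0,\infty)$, and that layer has width $O(1/\eta)$ with $\varphi_1=O(1/\eta)$ on it, so the first term can be as small as $O(1/\eta^2)$ --- the same order as the boundary-layer error $M\int_{\varphi_1<c/\eta}\varphi_1\,dx$. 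The two terms are comparable and neither dominates in general, so the sign of the reduced integral is not determined by your inequality.

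The paper's proof avoids this entirely by showing that the set $\{\eta\varphi_1+U<0\}$ is \emph{empty} once $\eta$ is large, so that $I=\int_D g(\eta\varphi_1+U)\varphi_1\,dx>0$ with no error term at all; $I\ge 0$ for $\eta\ge M$ (together with the crude bound $|I|\le\sup|g|\int_D\varphi_1\,dx$) is then enough for the ball-invariance argument. To get this pointwise positivity you must upgrade your $L^\infty$ bound on $U$: since $g$ and $e$ are bounded, the right-hand side of the auxiliary equation lies in $L^p(D)$ for every $p$, so $\|U\|_{W^{2,p}(D)}\le C$ uniformly for $p>n$, hence $\|U\|_{C^1(\bar D)}\le C$; combined with $U=0$ on $\partial D$ this gives $|U(x)|\le C\,\operatorname{dist}(x,\partial D)$, while Hopf's lemma gives $\varphi_1(x)\ge c_0\operatorname{dist}(x,\partial D)$. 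Hence $\eta\varphi_1+U\ge(\eta c_0-C)\operatorname{dist}(x,\partial D)>0$ throughout $D$ once $\eta>C/c_0$, and \eqref{24} yields $I>0$ directly (symmetrically $I<0$ for $\eta$ negative and large). This is exactly where the principal eigenvalue enters; your instinct that the absence of an interior nodal set is essential is correct, but the tool that exploits it is the $C^1$ estimate plus Hopf's lemma, not a measure-theoretic boundary-layer estimate.
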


If, in addition to \eqref{24}, we have
\begin{equation} \label{25}
\liminf _{u \to \infty} g(u)>0, \quad \text{and} \quad
\limsup _{u \to -\infty} g(u)<0 \,,
\end{equation}
then the previous Theorem \ref{thm:3} applies.
The result of  de Figueiredo and  Ni \cite{FN} allows either one (or both)
of these limits to be zero.

\begin{proof}[Proof of  Theorem \ref{thm:4}]
 Again we follow the proof of  Theorem \ref{thm:3}.  As before, we set up
the map $T:  (\eta _1,V) \to (\xi _1,U)$, taking the space $R \times \varphi _1^{\perp}$
into itself. We use \eqref{8}, with $k=1$, to compute $U$, while equation
\eqref{8.1} takes the form
\begin{equation} \label{26}
\xi _1=\eta _1-\int _D g(\eta _1 \varphi _1 (x)+U(x)) \varphi _1(x) \, dx\,.
\end{equation}
Since $\|U\|_{L^{\infty}(D)}$ is bounded uniformly in $(\eta _1,V)$, we can
find $M>0$, so that for all $x \in D$
\begin{gather*}
 \xi _1 \varphi _1 (x)+U(x)>0, \quad \text{for $\xi _1>M$} \\
 \xi _1 \varphi _1 (x)+U(x)<0, \quad \text{for $\xi _1<-M$} \,.
\end{gather*}
Hence, $\xi _1<\eta _1$ for $\eta _1>0$ and large, while $\xi _1>\eta _1$
for $\eta _1<0$ and $|\eta _1|$ large. As before, the map
$T: (\eta _1,V) \to (\xi _1,U)$ is a continuous and compact map, taking
a sufficiently large ball of $R \times \varphi _1^{\perp}$ into itself,
and the proof follows  by Schauder's fixed point theorem.
\end{proof}

This result was generalized to  unbounded $g(u)$ by  Iannacci,  Nkashama and
 Ward \cite{INW}. We now extend Theorem \ref{thm:4} to the higher eigenvalues.

\begin{theorem}
Consider the problem
\begin{equation} \label{26.1}
\Delta u+\lambda_k u+g(u)=e(x), \quad x \in D, \quad u=0 \quad \text{on } \partial D \,.
\end{equation}
Assume that $\lambda _k$, $ k \geq 1$,  is a simple eigenvalue, and
$e(x) \in L^{\infty}(D)$ satisfies
 $\int _D e(x) \varphi _k(x) \, dx=0$, while the function $g(u) \in C(R)$
is a bounded function, satisfying \eqref{24} and \eqref{25}.
Then the problem \eqref{26.1} has a solution
$u(x) \in W^{2,p}(D) \cap W_0^{1,p}(D)$, for any $p>1$.
\end{theorem}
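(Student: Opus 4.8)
The plan is to reduce the statement to Theorem \ref{thm:3} by verifying its hypotheses, the role of the extra assumption \eqref{25} being precisely to supply the one-sided bounds \eqref{20}--\eqref{21}. Since $\liminf_{u\to\infty}g(u)>0$, there are constants $d$ and $D>0$ with $g(u)>D$ for $u>d$; since $\limsup_{u\to-\infty}g(u)<0$, there are $c$ and $C<0$ with $g(u)<C$ for $u<c$. After enlarging $d$ and decreasing $c$ if necessary we may take $c<0<d$, and we then have $C<0<D$, so \eqref{20} and \eqref{21} hold with $c<d$ and $C<D$.

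With these constants I would compute $L_1$ and $L_2$. Because $D>0>C$ while $\int_{\varphi_k>0}\varphi_k\,dx>0>\int_{\varphi_k<0}\varphi_k\,dx$, every term defining $L_2$ is positive and every term defining $L_1$ is negative, so $L_1<0<L_2$. The orthogonality hypothesis gives $\int_D e\,\varphi_k\,dx=0$, which therefore lies strictly between $L_1$ and $L_2$; that is, \eqref{22} holds with $f=e$ (recall $e\in L^\infty(D)\subset L^2(D)$ on the bounded domain $D$). Theorem \ref{thm:3} now yields a solution in $W^{2,2}(D)\cap W_0^{1,2}(D)$, and, exactly as in the proof of Theorem \ref{thm:4}, the uniform $L^\infty$ bound on the right-hand side of the reduced equation together with elliptic bootstrapping upgrades the regularity to $W^{2,p}(D)\cap W_0^{1,p}(D)$ for every $p>1$.

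If instead one wants a self-contained argument mirroring the proof of Theorem \ref{thm:4}, I would run the same Lyapunov--Schmidt map $T:(\eta_k,V)\mapsto(\xi_k,U)$ on $R\times\varphi_k^{\perp}$, solving \eqref{8} for $U$ and setting $\xi_k=\eta_k-\int_D g(\eta_k\varphi_k+U)\varphi_k\,dx$ (here $A_k=0$). After the uniform bound $\|U\|_{L^\infty}\le c$, everything reduces to showing this integral is positive for $\eta_k$ large and positive, and negative for $\eta_k$ large and negative. The main obstacle --- and the reason one needs \eqref{25} rather than merely the single condition \eqref{24} used by de Figueiredo and Ni at $\lambda_1$ --- is that for $k\ge 2$ the eigenfunction $\varphi_k$ changes sign: the clean pointwise inequality $\xi_1\varphi_1+U>0$ throughout $D$, available for the positive principal eigenfunction, fails near the nodal set $\{\varphi_k=0\}$. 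I would handle this by the splitting used in the $L^2$ part of the proof of Theorem \ref{thm:3}: on the set where $\varphi_k>\delta$ one has $\eta_k\varphi_k+U\to+\infty$, hence $g>D$ by \eqref{25} and a positive integrand; the thin neighborhood of the nodal set has small measure and contributes $O(\epsilon)$, while \eqref{24} guarantees $g\varphi_k\ge 0$ on the part of $\{\varphi_k>0\}$ where $\eta_k\varphi_k+U$ is still positive. Treating $\{\varphi_k<0\}$ symmetrically gives $\int_D g\,\varphi_k\,dx>L_2-O(\epsilon)>0$, which is exactly the sign condition $\xi_k<\eta_k$ needed for Schauder's theorem.
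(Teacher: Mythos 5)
Your reduction to Theorem \ref{thm:3} is correct and is essentially the paper's own route: condition \eqref{25} supplies \eqref{20}--\eqref{21} with $C<0<D$, hence $L_1<0<L_2$, so the orthogonality of $e(x)$ gives \eqref{22} automatically --- exactly the observation the paper records in the remark following Theorem \ref{thm:4}, while the paper's printed proof of this theorem simply re-runs the Lyapunov--Schmidt/Schauder argument of Theorem \ref{thm:3} with $A_k=0$. Your self-contained second variant, with the splitting near the nodal set of $\varphi _k$, correctly fills in the detail behind the paper's terse assertion that $\xi _k<\eta _k$ for $\eta _k$ large and positive, so both versions are sound.
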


\begin{proof}
We follow the proof of Theorem \ref{thm:3}, replacing  problem \eqref{26.1}
by its Lyapunov-Schmidt decomposition \eqref{6}, \eqref{7}, then setting
up the map $T$, given by  \eqref{8} and
\[
\xi _k=\eta _k-\int _D g(\eta _k \varphi _k (x)+U(x)) \varphi _k(x) \, dx\,.
\]
Since $g(u)$ is bounded, the $L^{\infty}$ estimate \eqref{9} holds.
By our conditions on $g(u)$, $\xi _k <\eta _k$ ($\xi _k >\eta _k$),
provided that $\eta _k>0$ ($\eta _k<0$) and $|\eta _k|$ is large.
As in the proof of Theorem \ref{thm:3}, we conclude that the map $T$
has a fixed point.
\end{proof}

We now review another extension  of the result of  Iannacci et al \cite{INW}
to  the problem
\begin{equation} \label{230}
\Delta u +\lambda _1 u+g(u)=\mu _1 \varphi _1+e(x) \quad \text{on }\Omega, \quad u=0 \quad
\text{on } \partial D \,,
\end{equation}
with $e(x) \in \varphi _1 ^\perp$. Decompose the solution $u(x)=\xi _1 \varphi _1+U$,
with $U \in \varphi _1^\perp$. We wish to find a solution pair
$(u, \mu _1)=(u, \mu _1)(\xi _1)$, i.e., the global solution curve.
We proved the following result in  Korman \cite{K2}.

\begin{theorem}
Assume that $g(u) \in C^1(R)$ satisfies
\begin{gather*}
u g(u) >0 \quad \text{for all } u \in R \,, \\
 g'(u) \leq \gamma< \lambda _2-\lambda _1 \quad \text{for all } u \in R \,.
\end{gather*}
Then there is a continuous curve of solutions of \eqref{230}:
$(u(\xi _1),\mu _1(\xi _1))$, $u \in H^2(D) \cap H^1_0(D)$, with
$-\infty<\xi _1<\infty$, and $\int _D u(\xi _1) \varphi _1 \, dx=\xi _1$.
This curve exhausts the solution set of \eqref{230}. The continuous
function $\mu _1(\xi _1)$ is positive for $\xi _1 >0$ and large,
and $ \mu _1(\xi _1)<0$ for $\xi _1 <0$ and $|\xi _1|$ large.
In particular, $\mu _1(\xi^0 _1)=0$ at some $\xi^0 _1$, i.e.,
 we have a solution of
\[
\Delta u +\lambda _1 u+g(u)=e(x) \quad \text{on }D, \quad u=0 \quad \text{on }
 \partial D \,.
\]
\end{theorem}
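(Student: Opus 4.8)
The plan is to run the Lyapunov--Schmidt reduction of Theorem \ref{thm:3}, but to use the one-sided gap condition $g'(u)\le\gamma<\lambda_2-\lambda_1$ to solve the auxiliary equation \emph{uniquely}, which turns the entire solution set into a graph over $\xi_1$. Normalizing $\int_D\varphi_1^2\,dx=1$, writing $u=\xi_1\varphi_1+U$ with $U\in\varphi_1^\perp$, and letting $P$ denote the orthogonal projection of $L^2(D)$ onto $\varphi_1^\perp$, I would project \eqref{230} to obtain the bifurcation equation
\[
\mu_1=\int_D g(\xi_1\varphi_1+U)\varphi_1\,dx,
\]
together with the auxiliary equation in $\varphi_1^\perp$,
\[
\Delta U+\lambda_1 U+P\,g(\xi_1\varphi_1+U)=e(x),\quad U=0\ \text{on }\partial D.
\]

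First I would solve the auxiliary equation for $U=U(\xi_1)$. The key is that on $\varphi_1^\perp$ one has $\int_D|\nabla W|^2\,dx\ge\lambda_2\int_D W^2\,dx$, so for $0\neq W\in\varphi_1^\perp$ the linearization $W\mapsto\Delta W+\lambda_1 W+P\big(g'(\xi_1\varphi_1+U)W\big)$ has negative-definite quadratic form,
\[
\int_D\big(\Delta W+\lambda_1 W\big)W\,dx+\int_D g'(\xi_1\varphi_1+U)W^2\,dx\le-(\lambda_2-\lambda_1-\gamma)\int_D W^2\,dx<0,
\]
using $g'\le\gamma$. Thus the map $U\mapsto-(\Delta+\lambda_1)U-P\,g(\xi_1\varphi_1+U)$ is strongly monotone and coercive on $\varphi_1^\perp\cap H_0^1(D)$, so by the Browder--Minty theorem it is a bijection and the auxiliary equation has a unique solution $U(\xi_1)$; since the same estimate makes the linearization invertible, the implicit function theorem gives $C^1$ dependence on $\xi_1$, and elliptic regularity places $U(\xi_1)$ in $H^2(D)\cap H_0^1(D)$. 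Reading off $\mu_1(\xi_1)$ from the bifurcation equation then produces the asserted continuous curve. Exhaustion is immediate: any solution decomposes as $u=\xi_1\varphi_1+U$ with $\xi_1=\int_D u\varphi_1\,dx$, and uniqueness forces $U=U(\xi_1)$, $\mu_1=\mu_1(\xi_1)$.

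It remains to determine the sign of $\mu_1(\xi_1)$ for large $|\xi_1|$, which is the heart of the matter. Testing the auxiliary equation with $U$ and using the gap estimate yields the a priori bound
\[
(\lambda_2-\lambda_1-\gamma)\|U\|_{L^2}^2\le\big(\|g(\xi_1\varphi_1)\|_{L^2}+\|e\|_{L^2}\big)\|U\|_{L^2},
\]
while $g(0)=0$ together with $g'\le\gamma$ (and $\gamma>0$, forced by $ug(u)>0$) gives $|g(u)|\le\gamma|u|$, so $g$ is at most linear and $\|U(\xi_1)\|_{L^2}=O(1+|\xi_1|)$. Since $\lambda_1$ is the principal eigenvalue, $\varphi_1>0$ in $D$, and I would split $\mu_1=\int_D g(u)\varphi_1\,dx$ over $\{u>0\}$, where $g(u)\varphi_1>0$ by $ug(u)>0$, and $\{u<0\}$, where $\gamma u\le g(u)<0$ bounds the negative contribution. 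The dominant balance as $\xi_1\to+\infty$ makes the positive part prevail, so $\mu_1(\xi_1)>0$ for $\xi_1$ large and positive, and symmetrically $\mu_1(\xi_1)<0$ for $\xi_1$ large and negative; the continuous function $\mu_1(\xi_1)$ then vanishes at some $\xi_1^0$ by the intermediate value theorem, giving the stated solution of the unperturbed problem.

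I expect this sign analysis to be the main obstacle. When $g$ is bounded, $\|U(\xi_1)\|$ is uniform in $\xi_1$, and the argument reduces to the clean de Figueiredo--Ni reasoning of Theorem \ref{thm:4}: $u=\xi_1\varphi_1+U>0$ off a set of small $\varphi_1$-weight, forcing $\mu_1>0$. The delicate case is genuine linear growth of $g$, where $U$ itself may grow like $\xi_1$; here one must show that the positive contribution on $\{u>0\}$ still dominates the negative one on $\{u<0\}$ uniformly in $\xi_1$, for instance by a rescaling $V=U/\xi_1$ and passage to a limiting linear problem in which the relevant constant lies strictly between $\lambda_1$ and $\lambda_2$ and is therefore not an eigenvalue. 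Controlling $U$ on $\{u<0\}$ so that this set never carries enough $\varphi_1$-weight to reverse the sign of the integral is exactly the technical crux.
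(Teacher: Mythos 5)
The paper does not actually prove this theorem; it only states it and refers to \cite{K2}, so there is no internal proof to compare against. The first half of your proposal is sound and is the natural route: on $\varphi_1^{\perp}\cap H^1_0(D)$ one has $\int_D|\nabla W|^2\,dx\ge\lambda_2\int_D W^2\,dx$, so $g'\le\gamma<\lambda_2-\lambda_1$ does make $U\mapsto-\Delta U-\lambda_1U-Pg(\xi_1\varphi_1+U)$ strongly monotone on $\varphi_1^{\perp}$, giving a unique $U(\xi_1)$ and hence the exhaustion of the solution set. One caveat: only an upper bound on $g'$ is assumed, so the multiplication operator $W\mapsto g'(\xi_1\varphi_1+U)W$ need not be bounded on $L^2$ and the implicit function theorem is not directly available; continuity of $\xi_1\mapsto U(\xi_1)$ should instead be read off from the strong monotonicity inequality together with dominated convergence (using $|g(s)|\le\gamma|s|$).

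The genuine gap is the sign of $\mu_1(\xi_1)=\int_Dg(\xi_1\varphi_1+U(\xi_1))\varphi_1\,dx$ for $|\xi_1|$ large, which you correctly identify as the crux but do not prove. ``Dominant balance'' is an assertion, not an argument: your own a priori estimate gives only $\|U(\xi_1)\|_{L^2}\le\frac{\gamma}{\lambda_2-\lambda_1-\gamma}|\xi_1|+C$, and the constant $\gamma/(\lambda_2-\lambda_1-\gamma)$ can be arbitrarily large, so $U$ may dominate $\xi_1\varphi_1$ in $L^2$ and the set $\{u<0\}$ need not carry small $\varphi_1$-weight. The proposed rescaling $v=u/\xi_1$ does not produce ``a limiting linear problem'': along subsequences one gets $\Delta v+\lambda_1v+w=\mu^{*}\varphi_1$ with $\int_Dv\varphi_1\,dx=1$, where $w$ is constrained only by $vw\ge0$ and $|w|\le\gamma|v|$ (a piecewise-linear, jumping-nonlinearity relation, and not even that unless $g(s)/s$ has limits at $\pm\infty$); testing this limit equation with $\varphi_1$ and with $v$ yields $\mu^{*}=\int_Dw\varphi_1\,dx$ and a bound on $\|v-\varphi_1\|_{L^2}$, but no contradiction with $\mu^{*}\le0$. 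The bounded-$g$ case does reduce to the de Figueiredo--Ni reasoning, as you note, but the linear-growth case is exactly where the theorem's content lies, and there an additional idea --- showing that $\{u(\xi_1)<0\}$ becomes negligible in $\varphi_1$-weighted measure as $\xi_1\to+\infty$, which is the substance of \cite{K2} --- is missing. As written, your argument establishes the curve and its exhaustion of the solution set, but not the resonance conclusion $\mu_1(\xi_1^{0})=0$.
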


We see that the result of  Iannacci et al \cite{INW} corresponds to just
one point on this solution curve.

\section{Resonance for systems}

We begin by considering linear systems of the type
\begin{equation} \label{30}
\begin{gathered}
 \Delta u+au+bv=f(x) \quad x \in D, \quad u=0 \quad \text{on } \partial D \\
 \Delta v+cu+dv=g(x) \quad x \in D, \quad v=0 \quad \text{on } \partial D \,,
\end{gathered}
\end{equation}
with given functions $f(x)$ and $g(x)$, and constants $a$, $b$, $c$ and $d$.
As before, we denote by $\lambda _k$ the eigenvalues of $-\Delta$ with the
 Dirichlet boundary conditions (see \eqref{dl}),
and by $\varphi _k(x)$ the corresponding eigenfunctions. We shall assume throughout
 this section that   $\int _D \varphi ^2_k(x) \, dx=1$.

\begin{lemma}\label{lma:30}
Assume that
\[
(a-\lambda _k)(d-\lambda _k)-bc \ne 0, \quad  \text{for all $k \geq 1$} \,,
\]
i.e., all eigenvalues of $-\Delta$ are  not equal to the eigenvalues of the matrix
$\begin{bmatrix}
a & b\\
 c & d
\end{bmatrix}$.
Then for any pair $(f,g) \in L^2(D) \times L^2(D)$ there exists a unique
solution $(u,v)$, with $u$ and $v \in  W^{2,2}(D) \cap W_0^{1,2}(D) $.
Moreover, for some $c>0$
\[
\|u\|_{W^{2,2}(D)}+\|v\|_{W^{2,2}(D)}
\leq c \left( \|f\|_{L^2(D)}+\|g\|_{L^2(D)} \right) \,.
\]
\end{lemma}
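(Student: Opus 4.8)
The plan is to diagonalize the system against the Dirichlet eigenbasis $\{\varphi _k\}$ of $-\Delta$, which decouples \eqref{30} into an infinite family of $2\times 2$ algebraic systems. Write $M=\begin{bmatrix} a & b\\ c & d\end{bmatrix}$ and expand every function in the $L^2$-orthonormal basis $\{\varphi _k\}$: set $u=\sum _k u_k \varphi _k$, $v=\sum _k v_k \varphi _k$, $f=\sum _k f_k \varphi _k$, $g=\sum _k g_k \varphi _k$, with $u_k=\int _D u \varphi _k \, dx$, and so on. Using $\Delta \varphi _k=-\lambda _k \varphi _k$ and matching coefficients, \eqref{30} is equivalent to
\[
M_k \begin{bmatrix} u_k \\ v_k \end{bmatrix}=\begin{bmatrix} f_k \\ g_k \end{bmatrix}, \qquad M_k:=\begin{bmatrix} a-\lambda _k & b \\ c & d-\lambda _k \end{bmatrix}, \quad k\geq 1 \,.
\]
Here $\det M_k=(a-\lambda _k)(d-\lambda _k)-bc=p(\lambda _k)$, where $p$ is the characteristic polynomial of $M$, and the hypothesis says precisely that $p(\lambda _k)\neq 0$ for every $k$.

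The crucial quantitative point is a \emph{uniform} lower bound $\inf _k |\det M_k|=:\delta>0$. This holds because $p(\lambda _k)\to +\infty$ as $k\to\infty$ (since $\lambda _k\to\infty$ and $p$ is monic quadratic), so $|p(\lambda _k)|$ is bounded below for all large $k$; the remaining finitely many values are nonzero by hypothesis, hence bounded below by a positive number. With $\delta>0$ I invert each $M_k$ explicitly,
\[
\begin{bmatrix} u_k \\ v_k \end{bmatrix}=\frac{1}{\det M_k}\begin{bmatrix} d-\lambda _k & -b \\ -c & a-\lambda _k \end{bmatrix}\begin{bmatrix} f_k \\ g_k \end{bmatrix},
\]
and read off the two bounds that drive everything. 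Since the entries of $M_k^{-1}$ are $O(1/\lambda _k)$ and all these sequences are bounded (using $\delta$), there is a constant $C$ with $|u_k|+|v_k|\leq C(|f_k|+|g_k|)$ and, more importantly, $\lambda _k(|u_k|+|v_k|)\leq C(|f_k|+|g_k|)$ for all $k$: indeed the $\lambda _k$-weighted entries of $M_k^{-1}$ converge as $k\to\infty$ (the diagonal multipliers $\lambda _k(d-\lambda _k)/\det M_k$ and $\lambda _k(a-\lambda _k)/\det M_k$ to $-1$, the off-diagonal ones to $0$), hence form bounded sequences thanks to $\delta$.

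These pointwise-in-$k$ multiplier bounds yield the result by Parseval. From $\|\Delta u\|_{L^2}^2=\sum _k \lambda _k^2 u_k^2\leq C^2\sum _k (|f_k|+|g_k|)^2\leq 2C^2(\|f\|_{L^2}^2+\|g\|_{L^2}^2)$ and the same bound for $v$, together with the analogous (easier) $L^2$ bounds on $u,v$ themselves, the series $\sum _k u_k \varphi _k$ and $\sum _k v_k \varphi _k$ converge in the domain of the Dirichlet Laplacian; by elliptic regularity on the smooth domain $D$ this domain is $W^{2,2}(D)\cap W_0^{1,2}(D)$, and the elliptic estimate $\|u\|_{W^{2,2}}\leq C(\|\Delta u\|_{L^2}+\|u\|_{L^2})$ converts the Parseval bounds into the claimed inequality. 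Term-by-term application of $\Delta$ confirms that $(u,v)$ solves \eqref{30}, and uniqueness is immediate: if $f=g=0$ then all $f_k=g_k=0$, and invertibility of each $M_k$ forces $u_k=v_k=0$, so $u=v=0$. I expect the only genuine obstacle to be the uniform determinant bound $\delta>0$: the per-mode hypothesis $\det M_k\neq 0$ only gives invertibility for each fixed $k$, and it is the growth $p(\lambda _k)\to\infty$ that upgrades this to the uniform multiplier estimates needed both for convergence of the series in $W^{2,2}$ and for the a priori bound; the elliptic regularity step is routine, being the same elliptic theory already used in the proofs above.
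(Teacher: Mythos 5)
Your proof is correct and follows essentially the same route as the paper: expansion in the Dirichlet eigenbasis to reduce \eqref{30} to the $2\times 2$ systems $M_k(u_k,v_k)^T=(f_k,g_k)^T$, giving existence and uniqueness, followed by elliptic regularity to land in $W^{2,2}(D)\cap W_0^{1,2}(D)$. The only (harmless) difference is that you extract the a priori bound directly from the uniform multiplier estimates $\lambda_k(|u_k|+|v_k|)\leq C(|f_k|+|g_k|)$, whereas the paper first writes the estimate \eqref{31} with the extra $\|u\|_{L^2}+\|v\|_{L^2}$ terms and then removes them ``in a standard way'' using uniqueness; your version actually supplies the quantitative detail (the uniform lower bound on $|\det M_k|$) that the paper leaves implicit.
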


\begin{proof}
Using Fourier series, $f(x)=\Sigma _{k=1}^{\infty} f_k \varphi _k$,
$g(x)=\Sigma _{k=1}^{\infty} g_k \varphi _k$, $u(x)=\Sigma _{k=1}^{\infty} u_k \varphi _k$
 and $v(x)=\Sigma _{k=1}^{\infty} v_k \varphi _k$, we obtain the unique
solution $(u,v) \in L^2(D) \times L^2(D)$. Using  elliptic estimates
for each equation separately, we obtain
\begin{equation} \label{31}
\|u\|_{W^{2,2}(D)}+\|v\|_{W^{2,2}(D)}
\leq c \left( \|u\|_{L^2(D)}+\|v\|_{L^2(D)}+\|f\|_{L^2(D)}+\|g\|_{L^2(D)} \right) \,.
\end{equation}
Since we have uniqueness of solution for \eqref{30}, the extra terms
$\|u\|_{L^2(D)}$ and $\|v\|_{L^2(D)}$ are removed in a standard way.
\end{proof}

The following two lemmas are proved similarly.

\begin{lemma}\label{lma:31}
Consider the problem (here $\mu$ is a constant)
\begin{equation} \label{32}
\begin{gathered}
 \Delta u+\lambda _k u=f(x) \quad x\in D, \quad u=0 \quad \text{on } \partial D \\
 \Delta v+\mu v=g(x) \quad x\in D, \quad v=0 \quad \text{on } \partial D \,,
\end{gathered}
\end{equation}
Assume that $\mu \ne \lambda _n$, for all $n \geq 1$, $f(x) \in \varphi ^{\perp}_k$,
$g(x) \in L^2(D)$. One can select a solution such that $u(x) \in \varphi ^{\perp}_k$,
and $v(x) \in L^2(D)$. Such a solution is unique, and the estimate \eqref{31} holds.
\end{lemma}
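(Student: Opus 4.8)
The plan is to exploit the fact that the system \eqref{32} is \emph{decoupled}: the first equation involves only $u$ and the second only $v$, so the two can be treated independently, following the Fourier-series approach of Lemma \ref{lma:30}. The essential distinction is that the first equation is \emph{resonant}, since $\lambda_k$ is an eigenvalue of $-\Delta$, whereas the second is non-resonant, since $\mu\ne\lambda_n$ for every $n$.

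First I would expand in the eigenfunction basis, writing $f=\sum_{n=1}^\infty f_n\varphi_n$, $g=\sum_{n=1}^\infty g_n\varphi_n$, and seeking $u=\sum_{n=1}^\infty u_n\varphi_n$, $v=\sum_{n=1}^\infty v_n\varphi_n$. Since $\Delta\varphi_n=-\lambda_n\varphi_n$, the equations reduce to the scalar relations $(\lambda_k-\lambda_n)u_n=f_n$ and $(\mu-\lambda_n)v_n=g_n$. For $v$ the factor $\mu-\lambda_n$ never vanishes, so $v_n=g_n/(\mu-\lambda_n)$ is uniquely determined. For $u$ the factor $\lambda_k-\lambda_n$ vanishes exactly at $n=k$, where the relation reads $0\cdot u_k=f_k$; the hypothesis $f\in\varphi_k^\perp$ gives $f_k=0$, so the equation is consistent and leaves $u_k$ free, while $u_n=f_n/(\lambda_k-\lambda_n)$ for $n\ne k$. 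Choosing $u_k=0$ singles out the solution lying in $\varphi_k^\perp$.

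Because $\lambda_n\to\infty$, the denominators $\lambda_k-\lambda_n$ and $\mu-\lambda_n$ are bounded away from zero for all large $n$, so $\sum u_n^2\le C\sum f_n^2<\infty$ and $\sum v_n^2\le C\sum g_n^2<\infty$, which shows $u,v\in L^2(D)$. Uniqueness within $\varphi_k^\perp\times L^2(D)$ follows from the same relations: a solution of the homogeneous system ($f=g=0$) has $v_n=0$ for all $n$, and $u_n=0$ for $n\ne k$, while the constraint $u\in\varphi_k^\perp$ forces $u_k=0$ as well.

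The remaining point, and the only one needing a little care, is the estimate \eqref{31}. Applying the standard elliptic regularity estimate to each equation separately yields \eqref{31} with the lower-order terms $\|u\|_{L^2}$ and $\|v\|_{L^2}$ still present on the right, and the main obstacle is removing them: for the non-resonant $v$-equation this is routine, while for the resonant $u$-equation one must invoke injectivity of $\Delta+\lambda_k$ on $\varphi_k^\perp$, exactly as in the passage following \eqref{31} in Lemma \ref{lma:30}. Alternatively, and more transparently, I would read the bound directly off the Fourier coefficients: since $\|w\|_{W^{2,2}(D)}^2$ is comparable to $\sum_n(1+\lambda_n^2)w_n^2$ for $w\in W^{2,2}(D)\cap W_0^{1,2}(D)$, and the ratios $(1+\lambda_n^2)/(\lambda_k-\lambda_n)^2$ (over $n\ne k$) and $(1+\lambda_n^2)/(\mu-\lambda_n)^2$ stay bounded in $n$, one obtains at once $\|u\|_{W^{2,2}(D)}\le c\|f\|_{L^2(D)}$ and $\|v\|_{W^{2,2}(D)}\le c\|g\|_{L^2(D)}$, a stronger conclusion than \eqref{31}.
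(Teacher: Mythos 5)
Your proposal is correct and follows exactly the route the paper intends: the paper gives no separate proof of this lemma, stating only that it is ``proved similarly'' to Lemma \ref{lma:30}, i.e.\ by Fourier expansion in the $\varphi_n$ basis followed by elliptic estimates with the lower-order terms removed via uniqueness. Your write-up simply fills in the details of that remark (the consistency of the resonant mode via $f_k=0$, the choice $u_k=0$, and the removal of $\|u\|_{L^2}$ using injectivity on $\varphi_k^\perp$), so there is nothing to correct.
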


\begin{lemma}\label{lma:32}
Consider the problem
\begin{equation} \label{33}
\begin{gathered}
\Delta u+\lambda _k u=f(x) \quad x\in D, \quad u=0 \quad \text{on } \partial D \\
\Delta v+\lambda _m v=g(x) \quad x\in D, \quad v=0 \quad \text{on } \partial D \,,
\end{gathered}
\end{equation}
Assume that  $f(x) \in \varphi ^{\perp}_k$, $g(x) \in \varphi ^{\perp}_m$.
 One can select a solution such that $u(x) \in \varphi ^{\perp}_k$, and
$v(x) \in \varphi ^{\perp}_m$. Such a solution is unique, and the
estimate \eqref{31} holds.
\end{lemma}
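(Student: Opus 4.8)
The plan is to exploit the fact that the system \eqref{33} is completely uncoupled: the first equation involves only $u$ and the second only $v$. I would therefore solve each scalar resonant Dirichlet problem independently, following the pattern of Lemma \ref{lma:30}, since indeed the remark preceding these lemmas signals that the argument is the same.

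For the first equation, $\Delta u+\lambda_k u=f(x)$ with $f \in \varphi_k^\perp$, the orthogonality condition \eqref{2} holds, so by the Fredholm alternative for \eqref{1} a solution exists, and any two solutions differ by a multiple of $\varphi_k$. Concretely, expanding in the eigenbasis $f=\sum_n f_n \varphi_n$ with $f_k=0$, and writing $u=\sum_n u_n \varphi_n$, the equation reduces to $(\lambda_k-\lambda_n)u_n=f_n$. This determines $u_n=f_n/(\lambda_k-\lambda_n)$ for $n \ne k$ and leaves $u_k$ free; choosing $u_k=0$ selects the unique solution lying in $\varphi_k^\perp$. The identical argument applied to $\Delta v+\lambda_m v=g(x)$, with $g \in \varphi_m^\perp$, produces a unique $v \in \varphi_m^\perp$.

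For the estimate I would apply the standard elliptic $W^{2,2}$ estimate to each equation separately, yielding \eqref{31} with the lower-order terms $\|u\|_{L^2(D)}$ and $\|v\|_{L^2(D)}$ still present on the right. Since the homogeneous problem restricted to $\varphi_k^\perp$ (respectively $\varphi_m^\perp$) has only the trivial solution — precisely the uniqueness just established — these terms are removed in the usual manner, exactly as in the proof of Lemma \ref{lma:30}. A bootstrapping argument then places $u$ and $v$ in $W^{2,2}(D) \cap W_0^{1,2}(D)$.

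I do not expect a genuine obstacle here, as the decoupling reduces the whole statement to two copies of the scalar Fredholm alternative. The only point that deserves a moment's care is verifying that uniqueness holds within the orthogonal complements, so that the $L^2$ terms may legitimately be dropped; this is immediate, because the kernel of each homogeneous operator is exactly the span of the corresponding eigenfunction, which the projection onto $\varphi_k^\perp$ (respectively $\varphi_m^\perp$) removes.
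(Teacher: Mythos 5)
Your proposal is correct and follows essentially the same route as the paper, which gives no separate argument for this lemma but states that it is ``proved similarly'' to Lemma \ref{lma:30}: eigenfunction expansion, the choice $u_k=0$ and $v_m=0$ to select the representatives in $\varphi_k^{\perp}$ and $\varphi_m^{\perp}$, and the elliptic estimate \eqref{31} with the lower-order terms removed via uniqueness on the orthogonal complements. Nothing is missing.
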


\begin{lemma}\label{lma:33}
Consider the problem
\begin{equation} \label{34}
\begin{gathered}
 \Delta u+\lambda _k u+v=f(x) \quad x\in D, \quad u=0 \quad \text{on } \partial D \\
 \Delta v+\lambda _k v=g(x) \quad x\in D, \quad v=0 \quad \text{on } \partial D \,,
\end{gathered}
\end{equation}a
Assume that  $f(x) \in  \varphi ^{\perp}_k$, $g(x) \in \varphi ^{\perp}_k$.
One can select a solution such that $u(x) \in \varphi ^{\perp}_k$, and
$v(x) \in \varphi ^{\perp}_k$. Such a solution is unique, and the estimate
\eqref{31} holds.
\end{lemma}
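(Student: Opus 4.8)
The plan is to exploit the triangular structure of system \eqref{34}: the second equation involves $v$ alone, so I would solve it first and then feed the result into the first equation, exactly as in the Fourier-series approach of Lemma \ref{lma:30}. Expanding all four functions in the eigenfunction basis, $f=\sum_n f_n\varphi_n$, $g=\sum_n g_n\varphi_n$, $u=\sum_n u_n\varphi_n$, $v=\sum_n v_n\varphi_n$, and using $\Delta\varphi_n=-\lambda_n\varphi_n$, the two equations become, for each $n$,
\[
(\lambda_k-\lambda_n)u_n+v_n=f_n, \qquad (\lambda_k-\lambda_n)v_n=g_n .
\]
For $n\ne k$ the factor $\lambda_k-\lambda_n$ is nonzero, so these are uniquely solved by $v_n=g_n/(\lambda_k-\lambda_n)$ and $u_n=f_n/(\lambda_k-\lambda_n)-g_n/(\lambda_k-\lambda_n)^2$.

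First I would deal with the resonant coefficient $n=k$. The second equation there reads $0=g_k$, which holds precisely because $g\in\varphi_k^{\perp}$; the first reads $v_k=f_k$. Since we want $v\in\varphi_k^{\perp}$ we set $v_k=0$, and this is consistent exactly because $f\in\varphi_k^{\perp}$ forces $f_k=0$. The coefficient $u_k$ is left undetermined by the equations, and we set $u_k=0$ to place $u\in\varphi_k^{\perp}$. Thus the two orthogonality hypotheses are used precisely to make the resonant mode solvable and to pin down the free coefficients; the resulting $(u,v)\in L^2(D)\times L^2(D)$ is the unique solution lying in $\varphi_k^{\perp}\times\varphi_k^{\perp}$.

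A coordinate-free way to phrase the same argument, which I prefer for the write-up, is: solve $\Delta v+\lambda_k v=g$ by the Fredholm alternative (possible since $g\in\varphi_k^{\perp}$) and select the solution with $v\in\varphi_k^{\perp}$; then rewrite the first equation as $\Delta u+\lambda_k u=f-v$ and observe that its right-hand side again lies in $\varphi_k^{\perp}$, because both $f$ and the chosen $v$ do. Hence the first equation is solvable by Fredholm as well, and we select $u\in\varphi_k^{\perp}$. The one point that needs checking — and which I expect to be the only real (if mild) obstacle beyond Lemma \ref{lma:32} — is precisely this orthogonality $f-v\in\varphi_k^{\perp}$, which is exactly where the coupling term $+v$ could have caused trouble but does not.

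For uniqueness I would take the difference $(w,z)$ of two solutions in $\varphi_k^{\perp}\times\varphi_k^{\perp}$; it solves the homogeneous system. The second equation gives $\Delta z+\lambda_k z=0$ with $z\in\varphi_k^{\perp}$, forcing $z=0$; the first then gives $\Delta w+\lambda_k w=0$ with $w\in\varphi_k^{\perp}$, forcing $w=0$. Finally, applying the standard $W^{2,2}$ elliptic estimate to each scalar equation separately bounds $\|v\|_{W^{2,2}(D)}$ by $\|v\|_{L^2(D)}+\|g\|_{L^2(D)}$ and $\|u\|_{W^{2,2}(D)}$ by $\|u\|_{L^2(D)}+\|f-v\|_{L^2(D)}\le\|u\|_{L^2(D)}+\|f\|_{L^2(D)}+\|v\|_{L^2(D)}$, which combine to give \eqref{31}; the lower-order $L^2$ terms are then removed using the uniqueness just established, exactly in the standard way indicated after \eqref{31} in Lemma \ref{lma:30}.
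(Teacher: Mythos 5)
Your proposal is correct and, in its coordinate-free phrasing, coincides with the paper's own proof: solve the second equation first, select $v\in\varphi_k^{\perp}$ so that the right-hand side $f-v$ of the first equation again lies in $\varphi_k^{\perp}$, then solve for $u$ and normalize it into $\varphi_k^{\perp}$. The Fourier-coefficient version is the same argument in coordinates, and the uniqueness and estimate details you supply are exactly what the paper leaves implicit under ``proved similarly.''
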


\begin{proof}
The second equation in \eqref{34} has infinitely many solutions of the
form $v=v_0+c \varphi _k$. We now select $c$, so that  $v \in \varphi ^{\perp}_k$.
The first equation then takes the form
\[
\Delta u+\lambda _k u=f-v_0-c \varphi _k \in \varphi ^{\perp}_k \,.
\]
This equation has infinitely many solutions of the form
$u=u_0+c_1 \varphi _k$. We  select $c_1$, so that  $u \in \varphi ^{\perp}_k$.
\end{proof}

We now turn to nonlinear systems
\begin{equation} \label{35}
\begin{gathered}
 \Delta u+au+bv+f(u,v)=h(x) \quad x\in D, \quad u=0 \quad \text{on } \partial D \\
 \Delta v+cu+dv+g(u,v)=k(x) \quad x\in D, \quad v=0 \quad \text{on } \partial D \,,
\end{gathered}
\end{equation}a
with given $h(x)$, $k(x)$ and bounded $f(u,v)$, $g(u,v)$.
If there is no resonance, existence of solutions is easy.

\begin{theorem}\label{thm:31}
Assume that $(a-\lambda _n)(d-\lambda _n)-bc \ne 0$, for all $n \geq 1$, and
$f(u,v)$, $g(u,v)$ are bounded.
Then for any pair $(h,k) \in L^2(D) \times L^2(D)$ there exists a  solution
$(u,v)$, with $u$ and $v \in  W^{2,2}(D) \cap W_0^{1,2}(D) $.
\end{theorem}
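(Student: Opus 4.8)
The plan is to set up the problem as a fixed-point problem for the solution operator of the linear part and apply Schauder's theorem, exactly as in the scalar arguments of the previous section. The crucial point is that the non-resonance hypothesis $(a-\lambda_n)(d-\lambda_n)-bc \neq 0$ for all $n$ is precisely the hypothesis of Lemma \ref{lma:30}, so the linear system is uniquely solvable with the uniform elliptic bound \eqref{31}. The boundedness of $f$ and $g$ will then convert this into a uniform a priori bound, producing a compact self-map of a ball.

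Concretely, I would fix $M$ with $|f(u,v)| \leq M$ and $|g(u,v)| \leq M$ for all $(u,v)$, and define a map $T : L^2(D) \times L^2(D) \to L^2(D) \times L^2(D)$ by letting $(u,v)=T(\bar u,\bar v)$ be the unique solution furnished by Lemma \ref{lma:30} of
\begin{gather*}
\Delta u+au+bv=h(x)-f(\bar u,\bar v), \quad u=0 \text{ on } \partial D \,, \\
\Delta v+cu+dv=k(x)-g(\bar u,\bar v), \quad v=0 \text{ on } \partial D \,.
\end{gather*}
Because $\|h-f(\bar u,\bar v)\|_{L^2(D)} \leq \|h\|_{L^2(D)}+M\,|D|^{1/2}$, and similarly for the second equation, the right-hand sides are bounded in $L^2(D)$ independently of $(\bar u,\bar v)$. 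The estimate \eqref{31} of Lemma \ref{lma:30} then gives a fixed $R>0$ with $\|u\|_{W^{2,2}(D)}+\|v\|_{W^{2,2}(D)} \leq R$ for every input. Hence $T$ carries the whole space into a bounded subset of $W^{2,2}(D)\times W^{2,2}(D)$, and in particular the closed $L^2$-ball $B_R$ of radius $R$ is mapped into itself.

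To finish, $T$ is compact since its image is bounded in $W^{2,2}(D)\times W^{2,2}(D)$ and the embedding $W^{2,2}(D) \hookrightarrow L^2(D)$ is compact (Rellich--Kondrachov on the bounded smooth domain $D$); continuity of $T$ follows from the continuous dependence of the linear solution on its data, again via \eqref{31}, composed with the superposition (Nemytskii) maps $(\bar u,\bar v)\mapsto f(\bar u,\bar v)$ and $(\bar u,\bar v)\mapsto g(\bar u,\bar v)$ from $L^2(D)\times L^2(D)$ into $L^2(D)$. Schauder's fixed point theorem (see Nirenberg \cite{N}) then yields a fixed point of $T$, which is the desired solution of \eqref{35}, and the $W^{2,2}$-regularity comes for free from the bound above. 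The one step that is not purely mechanical is the continuity of these Nemytskii operators on $L^2(D)$: this is where one needs $f$ and $g$ to be continuous (as the nonlinear terms in \eqref{35} are understood to be), after which the uniform boundedness together with a dominated-convergence argument along $L^2$-convergent (hence a.e.-convergent) subsequences closes the gap; everything else is routine.
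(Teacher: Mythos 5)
Your proposal is correct and follows essentially the same route as the paper: the paper's proof also defines the map $(w,z)\mapsto(u,v)$ by solving the linear system with the bounded nonlinearities frozen at $(w,z)$, invokes Lemma \ref{lma:30} for unique solvability and the uniform $W^{2,2}$ bound, and applies Schauder's fixed point theorem on a large ball in $L^2(D)\times L^2(D)$. You merely supply details (the explicit radius, compactness via the Rellich--Kondrachov embedding, and continuity of the Nemytskii operators) that the paper leaves implicit.
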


\begin{proof}
The map $(w,z) \to (u,v)$, obtained by solving
\begin{gather*}
 \Delta u+au+bv=h(x)-f(w,z) \quad x\in D, \quad u=0 \quad \text{on } \partial D \\
 \Delta v+cu+dv=k(x)-g(w,z) \quad x\in D, \quad v=0 \quad \text{on } \partial D \,,
\end{gather*}
in view of Lemma \ref{lma:30} (and Sobolev's embedding), is a compact and
continuous map of a sufficiently large ball around the origin in
$L^2(D) \times L^2(D)$ into itself, and Schauder's fixed point theorem applies.
\end{proof}

Next,  we discuss the case of resonance, when  one of the eigenvalues of
 the coefficient matrix
$A=\begin{bmatrix}
a & b\\
 c & d
\end{bmatrix}$  is $\lambda  _k$.
We distinguish the following four possibilities: the second eigenvalue
of $A$ is not equal to $\lambda _m$ for all $m \geq 1$, the second eigenvalue
of $A$ is equal to $\lambda _m$ for some  $m \ne k$, the second eigenvalue
of $A$ is  equal to $\lambda _k$, and the  matrix $A$ is diagonalizable,
and finally the second eigenvalue of $A$ is  equal to $\lambda _k$, and
the matrix $A$ is not diagonalizable.
By a linear change of variables, $(u,v) \to (u_1,v_1)$,
$\begin{bmatrix}
u \\
 v
\end{bmatrix}
=Q \begin{bmatrix}
u_1 \\
 v_1
\end{bmatrix}$,
with a non-singular matrix $Q$, we transform the system \eqref{35} into
\[
\Delta
\begin{bmatrix}
u_1 \\
 v_1
\end{bmatrix}
 +Q^{-1}AQ \begin{bmatrix}
u_1 \\
 v_1
\end{bmatrix}
=Q^{-1}\begin{bmatrix}
h \\
 k
\end{bmatrix}
\]
with the matrix $Q^{-1}AQ$ being either diagonal, or the Jordan block
$\begin{bmatrix}
\lambda _k & 1\\
 0 & \lambda _k
\end{bmatrix}$.
Let us assume that this change of variables has been performed, so that
there are three canonical cases to consider.

We consider the system
\begin{equation} \label{36}
\begin{gathered}
 \Delta u+\lambda _k u+f(u,v)=h(x) \quad x\in D, \quad u=0 \quad \text{on } \partial D \\
 \Delta v+\mu v+g(u,v)=k(x) \quad x\in D, \quad v=0 \quad \text{on } \partial D \,.
\end{gathered}
\end{equation}
We assume that $f(u,v)$, $g(u,v) \in C(R \times R)$ are bounded on $R\times R$,
and there exist numbers $c$, $d$, $C$ and $D$, with  $c<d$ and $C<D$, such that
\begin{gather} \label{37}
f(u,v) > D \quad \text{for $u > d$}, \quad \text{uniformly in }v \in R \,, \\
 \label{38}
f(u,v) < C \quad \text{for } u < c, \quad \text{uniformly in }v \in R \,.
\end{gather}
Define
\[
L_2=D \int _{\varphi _k>0} \varphi _k \, dx+C \int _{\varphi _k<0} \varphi _k \, dx,
\quad L_1=C \int _{\varphi _k>0} \varphi _k \, dx+D \int _{\varphi _k<0} \varphi _k \, dx \,.
\]
Observe that $L_2>L_1$, because $D>C$.

\begin{theorem}\label{thm:30}
Assume that $\lambda _k$ is a simple eigenvalue, $\mu \ne \lambda _n$ for all
$n \geq 1$, while $f(u,v)$, $g(u,v) \in C(R \times R)$ are bounded on $R\times R$,
and  satisfy \eqref{37} and \eqref{38}. Assume that $h(x)$, $k(x) \in L^p(D)$,
for some  $p>n$. Assume finally that
\begin{equation} \label{39}
L_1<\int _D h(x) \varphi _k(x) \, dx<L_2 \,.
\end{equation}
Then the problem \eqref{36} has a solution $(u,v)$, with
$u,v \in  W^{2,p}(D) \cap W_0^{1,p}(D) $.
\end{theorem}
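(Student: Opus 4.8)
The plan is to mimic the proof of Theorem \ref{thm:3}, performing a Lyapunov--Schmidt reduction on the resonant first equation while treating the non-resonant second equation directly, and then invoking Schauder's fixed point theorem. Since $\lambda_k$ is simple, I decompose $h(x) = A_k \varphi_k(x) + e(x)$ with $A_k = \int_D h\varphi_k\,dx$ and $e \in \varphi_k^\perp$, and I look for $u(x) = \xi_k\varphi_k(x) + U(x)$ with $U \in \varphi_k^\perp$. The second equation, having $\mu \neq \lambda_n$ for all $n$, is non-resonant: for any right hand side in $L^p(D)$ it has a unique solution $v \in W^{2,p}(D)\cap W_0^{1,p}(D)$, with the elliptic estimate of Lemma \ref{lma:31} (now taken in $L^p$). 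Thus no scalar reduction is needed for $v$; the coupling between the two equations enters only through the nonlinearities $f$ and $g$.

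Concretely, I would set up a continuous compact map $T:(\eta_k, V, Z) \mapsto (\xi_k, U, W)$ on $R \times \varphi_k^\perp \times L^p(D)$ as follows. Writing $w = \eta_k\varphi_k + V$, first solve for $W$ the non-resonant equation $\Delta W + \mu W = k - g(w, Z)$, and solve for $U \in \varphi_k^\perp$ the equation
\[
\Delta U + \lambda_k U = -f(w, Z) + \varphi_k \int_D f(w,Z)\varphi_k\,dx + e,
\]
whose right hand side is orthogonal to $\varphi_k$ by construction, so that the Fredholm alternative furnishes a solution (unique in $\varphi_k^\perp$) exactly as in \eqref{8}. Then set
\[
\xi_k = \eta_k + A_k - \int_D f(\eta_k\varphi_k + U, W)\varphi_k\,dx,
\]
in analogy with \eqref{8.1}. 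A fixed point $(\eta_k, V, Z) = (\xi_k, U, W)$ of $T$ recovers, on substituting back, precisely a solution of \eqref{36}: the $\xi_k$-relation becomes the solvability condition $\int_D f(u,v)\varphi_k\,dx = A_k$, which collapses the $U$-equation to $\Delta u + \lambda_k u + f(u,v) = h$, while the $W$-equation becomes $\Delta v + \mu v + g(u,v) = k$.

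Because $f$ and $g$ are bounded and $p > n$, elliptic regularity together with the Sobolev embedding $W^{2,p}(D)\hookrightarrow L^\infty(D)$ gives uniform bounds $\|U\|_{L^\infty} \leq c$ and $\|W\|_{L^\infty} \leq c$, independent of $(\eta_k, V, Z)$; these also yield compactness of $T$ (the solution operators land in $W^{2,p}$, which embeds compactly into $L^p$), while continuity follows from continuity of the Nemytskii operators associated to the bounded continuous $f, g$. The decisive step is the Landesman--Lazer estimate for the resonant mode: using $\|U\|_{L^\infty}\leq c$, for $\eta_k$ large and positive the argument $\eta_k\varphi_k + U$ exceeds $d$ on $\{\varphi_k > \delta\}$, so \eqref{37} forces $f(\eta_k\varphi_k + U, W) > D$ there, and symmetrically \eqref{38} forces $f < C$ on $\{\varphi_k < -\delta\}$; handling the thin set $\{|\varphi_k| < \delta\}$ exactly as in the proof of Theorem \ref{thm:3} yields $\int_D f(\eta_k\varphi_k + U, W)\varphi_k\,dx > L_2 - O(\delta) > A_k$ by \eqref{39}, hence $\xi_k < \eta_k$; the opposite inequality holds for $\eta_k$ large and negative. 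Since $|\xi_k - \eta_k|$ is uniformly bounded, this confines the $R$-component to a fixed interval $[-N, N]$, and together with the $L^\infty$-bounds on $U, W$ it shows $T$ maps a large ball of $R\times\varphi_k^\perp\times L^p(D)$ into itself; Schauder's theorem then gives the desired fixed point.

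The one feature I expect to require genuine care --- and the only place where the structure differs essentially from the scalar Theorem \ref{thm:3} --- is that the resonance estimate must be insensitive to the $v$-component. This is exactly guaranteed by the uniformity in $v$ built into hypotheses \eqref{37} and \eqref{38}: the sign information on $f$ holds for all $v \in R$, so the (uncontrolled in sign, merely bounded) solution $W$ of the non-resonant equation never interferes with the Landesman--Lazer computation. Everything else --- the $L^p$ elliptic estimates, the compactness and continuity of $T$, and the ball-invariance bookkeeping --- is routine given Lemma \ref{lma:31} and the argument already carried out for Theorem \ref{thm:3}.
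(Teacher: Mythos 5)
Your proposal is correct and follows essentially the same route as the paper's own proof: a Lyapunov--Schmidt reduction of the resonant first equation combined with direct solution of the non-resonant second equation via Lemma \ref{lma:31}, uniform $L^\infty$ bounds from $W^{2,p}$ estimates with $p>n$, the Landesman--Lazer sign estimate (uniform in $v$) to confine the scalar component, and Schauder's fixed point theorem. The only cosmetic difference is your explicit appeal to the thin-set argument of Theorem \ref{thm:3}, where the paper simply invokes the $C^1$ bounds available since $h,k\in L^p(D)$ with $p>n$.
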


\begin{proof}
Denoting $A_k=\int _D h(x) \varphi _k \, dx$, we decompose
$h(x)=A_k \varphi _k (x)+e(x)$, with $e(x) \in \varphi _k^{\perp}$
(where $\varphi _k^{\perp}$ is a subspace of $L^2(D)$).
Similarly, we decompose the solution $u(x)=\xi _k \varphi _k (x)+U(x)$,
with $U(x) \in \varphi _k^{\perp}$.
Multiply the first equation in \eqref{36} by $\varphi _k$, and integrate
\begin{equation} \label{40}
\int _D f(\xi _k \varphi _k (x)+U(x),v) \varphi _k(x) \, dx=A_k \,.
\end{equation}
Then the first equation in \eqref{36} becomes
\begin{equation} \label{41}
\begin{gathered}
 \Delta U+\lambda_k U=-f(\xi _k \varphi _k +U,v)+\varphi _k \int _D f(\xi _k \varphi _k +U ,v) \varphi _k\, dx
 +e, \, x\in D \\
  U=0 \quad \text{on } \partial D \,.
\end{gathered}
\end{equation}
Equations \eqref{40} and \eqref{41} constitute the classical Lyapunov-Schmidt
reduction of the first equation in \eqref{36}. To solve \eqref{40}, \eqref{41}
and the second equation in \eqref{36}, we set up a map
 $T:  (\alpha _k,W,Z) \to (\xi _k,U,V)$, taking the space
$R \times \varphi _k^{\perp} \times L^2(D)$ into itself, by solving
(separately) the linear equations
\begin{equation} \label{42}
\begin{gathered}
 \Delta U+\lambda_k U=-f(\alpha _k \varphi _k +W,Z)+\varphi _k \int _D f(\alpha _k \varphi _k +W ,Z)
 \varphi _k\, dx+e,  \\
 \Delta V+\mu V=-g(\alpha _k \varphi _k +W,Z)+k(x) \\
    U=V=0 \quad \text{on } \partial D \,,
\end{gathered}
\end{equation}
and then setting
\begin{equation} \label{43}
\xi _k=\alpha _k +A_k- \int _D f(\alpha _k \varphi _k +U ,V) \varphi _k(x) \, dx \,.
\end{equation}

 By Lemma \ref{lma:31}, the map $T$ is well defined, and  $\|U\|_{W^{2,p}(D)}$
and $\|V\|_{W^{2,p}(D)}$ are bounded, and then by the Sobolev embedding
$\|U\|_{C^1(D)}$  and $\|V\|_{C^1(D)}$ are bounded uniformly in $(\alpha _k,W,Z)$.
This implies that if $\alpha _k$ is large  and positive, the integral in \eqref{43}
is greater than  $L_2>A_k$. When $\alpha _k$ is large in absolute value and negative,
the integral in \eqref{43} is smaller than $L_1<A_k$.
  It follows that for $\alpha _k$ large and positive,  $\xi _k<\alpha _k$,
 while for $\alpha _k$ large in absolute value and negative,  $\xi _k>\alpha _k$.
Hence, we can find a large $N$, so that if $\alpha _k \in (-N,N)$,
then  $\xi _k \in (-N,N)$. The map $T$ is a continuous and compact map,
taking a sufficiently large ball of $R \times \varphi _k^{\perp} \times L^2(D)$
into itself. By Schauder's fixed point theorem it has a fixed point,
which gives us a solution of \eqref{36}.
\end{proof}

We consider  next  the system
\begin{equation} \label{46}
\begin{gathered}
 \Delta u+\lambda _k u+f(u,v)=h(x) \quad x\in D, \quad u=0 \quad \text{on } \partial D \\
 \Delta v+\lambda _ m v+g(u,v)=k(x) \quad x\in D, \quad v=0 \quad \text{on } \partial D \,,
\end{gathered}
\end{equation}a
which includes, in particular, the case $m=k$.
Assume that there exist numbers $c_1$, $d_1$, $C_1$ and $D_1$, with
$c_1<d_1$ and $C_1<D_1$, such that
\begin{gather} \label{47}
g(u,v) > D_1 \quad \text{for }v > d_1, \quad \text{uniformly in } u \in R\,, \\
\label{48}
g(u,v) < C_1 \quad \text{for } v < c_1, \quad \text{uniformly in } u \in R \,.
\end{gather}
Define
\begin{gather*}
M_2=D_1 \int _{\varphi _m>0} \varphi _m \, dx+C_1 \int _{\varphi _m<0} \varphi _m \, dx, \\
M_1=C_1 \int _{\varphi _m>0} \varphi _m \, dx+D_1 \int _{\varphi _m<0} \varphi _m \, dx \,.
\end{gather*}
Observe that $M_2>M_1$, because $D_1>C_1$.

\begin{theorem}\label{thm:5}
Assume  $\lambda _k$ and $\lambda _m$ are simple eigenvalues, while
$f(u,v)$, $g(u,v)$ belong to $C(R \times R)$ are bounded on $R\times R$, and
satisfy \eqref{37}, \eqref{38} and \eqref{47},  \eqref{48}. Assume that
$h(x)$, $k(x) \in L^p(D)$, for some  $p>n$. Assume finally that
\begin{equation} \label{49}
L_1<\int _D h(x) \varphi _k(x) \, dx<L_2, \quad \text{and} \quad
M_1<\int _D k(x) \varphi _m (x)\, dx<M_2 \,.
\end{equation}
Then  problem \eqref{46} has a solution $(u,v)$, with
$u,v \in  W^{2,p}(D) \cap W_0^{1,p}(D) $.
\end{theorem}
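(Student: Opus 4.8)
The plan is to carry out the Lyapunov--Schmidt reduction of Theorem \ref{thm:30} in \emph{both} equations of \eqref{46} at once, and then to use the fact that \eqref{37}--\eqref{38} hold uniformly in $v$ while \eqref{47}--\eqref{48} hold uniformly in $u$; this uniformity is exactly what decouples the two scalar bifurcation equations. First I would put $A_k=\int_D h\varphi_k\,dx$ and $B_m=\int_D k\varphi_m\,dx$, write $h=A_k\varphi_k+e_1$ with $e_1\in\varphi_k^{\perp}$ and $k=B_m\varphi_m+e_2$ with $e_2\in\varphi_m^{\perp}$, and look for a solution in the form $u=\xi_k\varphi_k+U$, $v=\eta_m\varphi_m+V$ with $U\in\varphi_k^{\perp}$, $V\in\varphi_m^{\perp}$. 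Projecting the two equations of \eqref{46} onto $\varphi_k$, $\varphi_m$ and onto their orthogonal complements yields the scalar bifurcation equations $\int_D f(u,v)\varphi_k\,dx=A_k$ and $\int_D g(u,v)\varphi_m\,dx=B_m$, together with auxiliary equations for $U$ and $V$ whose right-hand sides are, by construction, orthogonal to $\varphi_k$ and to $\varphi_m$, respectively.

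Next I would set up a map $T\colon(\alpha_k,\beta_m,W,Z)\to(\xi_k,\eta_m,U,V)$ on $R\times R\times\varphi_k^{\perp}\times\varphi_m^{\perp}$, paralleling \eqref{42}--\eqref{43}. Given $(\alpha_k,\beta_m,W,Z)$, solve the two linear problems
\begin{gather*}
\Delta U+\lambda_k U=-f+\varphi_k\int_D f\,\varphi_k\,dx+e_1,\\
\Delta V+\lambda_m V=-g+\varphi_m\int_D g\,\varphi_m\,dx+e_2,
\end{gather*}
with $U=V=0$ on $\partial D$, where $f$ and $g$ are evaluated at $(\alpha_k\varphi_k+W,\beta_m\varphi_m+Z)$; by Lemma \ref{lma:32} there is a unique selection with $U\in\varphi_k^{\perp}$, $V\in\varphi_m^{\perp}$, and this lemma already covers the coincident case $m=k$ because the linear parts are uncoupled. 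Then set $\xi_k=\alpha_k+A_k-\int_D f(\alpha_k\varphi_k+U,\beta_m\varphi_m+V)\varphi_k\,dx$ and, symmetrically, $\eta_m=\beta_m+B_m-\int_D g(\alpha_k\varphi_k+U,\beta_m\varphi_m+V)\varphi_m\,dx$. A fixed point of $T$ satisfies both bifurcation equations and hence solves \eqref{46}.

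Since $f$ and $g$ are bounded, Lemma \ref{lma:32} gives uniform bounds on $\|U\|_{W^{2,p}(D)}$ and $\|V\|_{W^{2,p}(D)}$, and as $p>n$ the Sobolev embedding yields uniform $C^1(\bar D)$, hence $L^{\infty}$, bounds on $U$ and $V$. With these in hand the magnitude analysis reduces to the scalar estimate of Theorem \ref{thm:3}: on $\{\varphi_k>\delta\}$, whose complement in $\{\varphi_k>0\}$ has small measure for small $\delta$, one has $\alpha_k\varphi_k+U>d$ once $\alpha_k$ is large, so $f>D$ by \eqref{37}, \emph{uniformly in the second argument}; a symmetric estimate on $\{\varphi_k<0\}$ via \eqref{38} gives $\int_D f(\cdots)\varphi_k\,dx>L_2>A_k$ for $\alpha_k$ large positive and $<L_1<A_k$ for $\alpha_k$ large negative, so $\xi_k<\alpha_k$ and $\xi_k>\alpha_k$ respectively, regardless of $\beta_m$ and $V$. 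The identical computation using \eqref{47}--\eqref{48}, now with $M_1,M_2,B_m$ in place of $L_1,L_2,A_k$ (and invoking the second inequality in \eqref{49}), controls $\eta_m$ regardless of $\alpha_k$ and $U$.

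Combining these two one-dimensional conclusions, I can pick $N$ so large that $(\alpha_k,\beta_m)\in(-N,N)^2$ forces $(\xi_k,\eta_m)\in(-N,N)^2$; together with the uniform $W^{2,p}$ bounds on $U,V$ this makes $T$ a continuous and compact self-map of a sufficiently large ball in $R\times R\times\varphi_k^{\perp}\times\varphi_m^{\perp}$, and Schauder's fixed point theorem completes the proof. The only real novelty over Theorem \ref{thm:30} — and the step I expect to require the most care — is this decoupling of the two bifurcation equations: it succeeds precisely because \eqref{37}--\eqref{38} are assumed uniformly in $v$ and \eqref{47}--\eqref{48} uniformly in $u$, so that the sign of each projected integral is governed by $\alpha_k$ (respectively $\beta_m$) alone, with no feedback from the other component.
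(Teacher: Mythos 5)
Your proposal is correct and follows essentially the same route as the paper's own proof: the same double Lyapunov--Schmidt decomposition, the same map $T$ built from the two separately solved linear problems via Lemma \ref{lma:32}, the same use of the uniform $C^1$ bounds and of the uniformity in the other variable in \eqref{37}--\eqref{38} and \eqref{47}--\eqref{48} to control $\xi_k$ and $\eta_m$ independently, and the same conclusion by Schauder's fixed point theorem. No substantive differences.
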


\begin{proof}
Denoting $A_k=\int _D h(x) \varphi _k \, dx$ and
$B_m=\int _D k(x) \varphi _m \, dx$, we decompose
$h(x)=A_k \varphi _k (x)+e_1(x)$ and $k(x)=B_m \varphi _m (x)+e_2(x)$,
with $e_1(x) \in \varphi _k^{\perp}$ and $e_2(x) \in \varphi _m^{\perp}$.
Similarly, we decompose the solution $u(x)=\xi _k \varphi _k (x)+U(x)$
and $v(x)=\eta _m \varphi _m (x)+V(x)$, with $U(x) \in \varphi _k^{\perp}$
and $V(x) \in \varphi _m^{\perp}$. As before, we write down the Lyapunov-Schmidt
reduction of our problem \eqref{46}
\begin{gather*}
\int _D f(\xi _k \varphi _k +U ,\eta _m \varphi _m+V) \varphi _k(x) \, dx=A_k \\
 \int _D g(\xi _k \varphi _k +U ,\eta _m \varphi _m+V) \varphi _m(x) \, dx=B_m \\
\begin{aligned}
\Delta U+\lambda_k U
&=-f(\xi _k \varphi _k +U,\eta _m \varphi _m+V)\\
&\quad +\varphi _k \int _D f(\xi _k \varphi _k +U ,\eta _m \varphi _m+V) \varphi _k\, dx+e_1,
\end{aligned} \\
\begin{aligned}
\Delta V+\lambda_m V
&=-g(\xi _k \varphi _k +U,\eta _m \varphi _m+V) \\
&\quad +\varphi _m \int _D g(\xi _k \varphi _k +U ,\eta _m \varphi _m+V) \varphi _m\, dx+e_2,
\end{aligned} \\
  U=V=0 \quad \text{on } \partial D \,.
\end{gather*}
To solve this system, we set up a map
$T:  (\alpha _k,W,\beta _m,Z) \to (\xi _k,U,\eta _m,V)$,
taking the space $(R \times \varphi _k^{\perp}) \times (R \times \varphi_m ^{\perp})$
into itself, by solving (separately) the linear problems
\begin{gather*}
\begin{aligned}
\Delta U+\lambda_k U
&=-f(\alpha _k \varphi _k +W,\beta _m \varphi _m+Z)  \\
&\quad +\varphi _k \int _D f(\alpha _k \varphi _k +W ,\beta _m \varphi _m+Z) \varphi _k\, dx+e_1,
\end{aligned} \\
\begin{aligned}
 \Delta V+\lambda _m  V
&=-g(\alpha _k \varphi _k +W,\beta _m \varphi _m+Z) \\
&\quad  +\varphi _m \int _D g(\alpha _k \varphi _k +W ,\beta _m \varphi _m+Z) \varphi _m\, dx+e_2,
\end{aligned}\\
   U=V=0 \quad \text{on } \partial D \,,
\end{gather*}
and then setting
\begin{equation} \label{55}
\begin{gathered}
 \xi _k=\alpha _k +A_k- \int _D f(\alpha _k \varphi _k +U ,\beta _m \varphi _m+V) \varphi _k(x) \, dx \\
 \eta _m=\beta _m +B_m -\int _D g(\alpha _k \varphi _k +U ,\beta _m \varphi _m+V) \varphi _m\, dx \,.
\end{gathered}
\end{equation}

By Lemma \ref{lma:32}, the map $T$ is well defined, and
$\|U\|_{C^1(D)}$  and $\|V\|_{C^1(D)}$ are bounded uniformly in
$(\alpha _k,W,\beta _m,Z)$.
This implies that if $\alpha _k$ is large  and positive,
$\int _D f(\alpha _k \varphi _k +U ,\beta _m \varphi _m+V) \varphi _k\, dx>L_2>A_k$.
When $\alpha _k$ is large in absolute value and negative, this integral
is smaller than $L_1<A_k$.
  It follows that for $\alpha _k$ large and positive,  $\xi _k<\alpha _k$,
 while for $\alpha _k$ large and negative,  $\xi _k>\alpha _k$. Hence, we
can find a large $N$, so that if $\alpha _k \in (-N,N)$,  then
$\xi _k \in (-N,N)$, and arguing similarly with the second line in \eqref{55},
 we see that if $\beta _m \in (-N,N)$,  then  $\eta _m \in (-N,N)$
(possibly with a larger $N$). The map $T$ is a continuous and compact map,
taking a sufficiently large ball of
$(R \times \varphi _k^{\perp}) \times (R \times \varphi_m ^{\perp})$ into itself.
 By Schauder's fixed point theorem it has a fixed point, which gives us
 a solution of \eqref{46}.
\end{proof}

We now turn to the final case
\begin{equation} \label{56}
\begin{gathered}
\Delta u+\lambda _k u+v+f(u,v)=h(x) \quad x\in D, \quad u=0 \quad \text{on } \partial D \\
\Delta v+\lambda _ k v+g(u,v)=k(x) \quad x\in D, \quad v=0 \quad \text{on } \partial D \,.
\end{gathered}
\end{equation}
Assume that there exist numbers $c_2$, $d_2$, $C_2$ and $D_2$, with
$c_2<d_2$ and $C_2<D_2$, such that
\begin{gather} \label{57}
g(u,v) > D_2 \quad \text{for } u > d_2, \quad \text{uniformly in }v \in R \,, \\
\label{58}
g(u,v) < C_2 \quad \text{for $u < c_2$}, \quad \text{uniformly in $v \in R$} \,.
\end{gather}
Define
\[
N_2=D_2 \int _{\varphi _k>0} \varphi _k \, dx+C_2 \int _{\varphi _k<0} \varphi _k \, dx, \quad
N_1=C_2 \int _{\varphi _k>0} \varphi _k \, dx+D_2 \int _{\varphi _k<0} \varphi _k \, dx \,.
\]
Observe that $N_2>N_1$, because $D_2>C_2$.

\begin{theorem}\label{thm:8}
Assume that $\lambda _k$ is a simple eigenvalue, while $f(u,v)$,
$g(u,v) \in C(R \times R)$ are bounded on $R\times R$, and  $g$
satisfies \eqref{57}, \eqref{58}. Assume that $h(x)$, $k(x) \in L^p(D)$,
for some  $p>n$. Assume finally that
\begin{equation} \label{59}
N_1<\int _D k(x) \varphi _k \, dx<N_2 \,.
\end{equation}
Then  problem \eqref{56} has a solution $(u,v)$, with
$u,v \in  W^{2,p}(D) \cap W_0^{1,p}(D) $.
\end{theorem}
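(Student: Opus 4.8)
The plan is to run the Lyapunov--Schmidt scheme exactly as in Theorem \ref{thm:5}, the one genuinely new ingredient being the coupling term $v$ in the first equation of \eqref{56}, which reshuffles the roles of the two resonant parameters. Set $A_k=\int_D h\varphi_k\,dx$ and $B_k=\int_D k\varphi_k\,dx$, and decompose $h=A_k\varphi_k+e_1$, $k=B_k\varphi_k+e_2$ with $e_1,e_2\in\varphi_k^\perp$, and the unknowns as $u=\xi_k\varphi_k+U$, $v=\eta_k\varphi_k+V$ with $U,V\in\varphi_k^\perp$. Multiplying the two equations in \eqref{56} by $\varphi_k$, integrating, and using $\int_D(\Delta w+\lambda_k w)\varphi_k\,dx=0$ for $w$ vanishing on $\partial D$, I obtain the bifurcation equations
\begin{gather*}
\eta_k+\int_D f(\xi_k\varphi_k+U,\eta_k\varphi_k+V)\varphi_k\,dx=A_k, \\
\int_D g(\xi_k\varphi_k+U,\eta_k\varphi_k+V)\varphi_k\,dx=B_k.
\end{gather*}
The first one, owing to the $v$-coupling, solves explicitly for $\eta_k$; the second is the true resonance relation.

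Projecting \eqref{56} onto $\varphi_k^\perp$ and cancelling the $\varphi_k$-components through the bifurcation equations, the orthogonal parts $U,V$ satisfy
\begin{gather*}
\Delta U+\lambda_k U+V=-f+\varphi_k\int_D f\varphi_k\,dx+e_1, \\
\Delta V+\lambda_k V=-g+\varphi_k\int_D g\varphi_k\,dx+e_2,
\end{gather*}
with both right-hand sides in $\varphi_k^\perp$. This is precisely the coupled system of Lemma \ref{lma:33}, so it has a unique solution with $U,V\in\varphi_k^\perp$ and the estimate \eqref{31}. Since $f,g$ are bounded and $h,k\in L^p(D)$ with $p>n$, elliptic regularity and the embedding $W^{2,p}\hookrightarrow C^1$ bound $\|U\|_{C^1(D)}$ and $\|V\|_{C^1(D)}$ uniformly in the data.

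I then build the continuous compact map $T:(\alpha_k,W,\beta_k,Z)\to(\xi_k,U,\eta_k,V)$ on $(R\times\varphi_k^\perp)\times(R\times\varphi_k^\perp)$: solve the linear system above with arguments frozen at the inputs to get $U,V$, and set
\begin{gather*}
\xi_k=\alpha_k+B_k-\int_D g(\alpha_k\varphi_k+U,\beta_k\varphi_k+V)\varphi_k\,dx, \\
\eta_k=A_k-\int_D f(\alpha_k\varphi_k+U,\beta_k\varphi_k+V)\varphi_k\,dx,
\end{gather*}
a fixed point of which reproduces both bifurcation equations. Crucially, the one-sided bounds \eqref{57}, \eqref{58} hold uniformly in $v$, so with the $C^1$-bounds on $U,V$ they give, for $\alpha_k$ large and positive, $\int_D g\varphi_k\,dx>N_2>B_k$, and for $\alpha_k$ large and negative $\int_D g\varphi_k\,dx<N_1<B_k$; hence $\xi_k<\alpha_k$ (resp.\ $\xi_k>\alpha_k$), and by \eqref{59} the component $\xi_k$ maps some large interval $(-N,N)$ into itself. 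The output $\eta_k$ is automatically confined to a fixed bounded interval because $f$ is bounded, and $U,V$ stay in a fixed ball; thus $T$ carries a sufficiently large ball into itself, and Schauder's fixed point theorem produces a solution of \eqref{56}.

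The step I expect to demand the most care is the allocation of the two resonant amplitudes. In contrast to Theorems \ref{thm:30} and \ref{thm:5}, the $v$-coupling absorbs the resonance of the first equation entirely into $\eta_k$, so no sign condition on $f$ and no constraint on $\int_D h\varphi_k\,dx$ enters; all of the solvability is carried by the second equation. Since the one-sided bounds on $g$ are imposed in the $u$-variable, it is the first-component amplitude $\xi_k$, not $\eta_k$, that drives $\int_D g\varphi_k\,dx$, and matching this against $\int_D k\varphi_k\,dx$ is exactly \eqref{59}. The delicate point is to check that this cross-dependence is compatible with the fixed-point bookkeeping --- that $\eta_k$ may be left free (yet bounded, thanks to the uniformity of \eqref{57}, \eqref{58} in $v$) while $\xi_k$ alone is pinned by \eqref{59}; once that is arranged, the self-mapping and compactness arguments are routine.
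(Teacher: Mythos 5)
Your proposal is correct and follows essentially the same route as the paper: the same Lyapunov--Schmidt decomposition, the same reduced system solved via Lemma \ref{lma:33}, the same map $T$ with $\xi_k$ updated through the $g$-equation and $\eta_k$ determined (and automatically bounded) by the $f$-equation, and the same invariant-ball argument for Schauder. Your remark that the $v$-coupling shifts the resonance of the first equation entirely onto $\eta_k$, so that only condition \eqref{59} on $k(x)$ is needed, is exactly the point the paper's proof exploits.
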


\begin{proof}
Denoting $A_k=\int _D h(x) \varphi _k \, dx$ and $B_k=\int _D k(x) \varphi _k \, dx$,
we decompose $h(x)=A_k \varphi _k (x)+e_1(x)$ and $k(x)=B_k \varphi _k (x)+e_2(x)$,
with $e_1, e_2 \in \varphi _k^{\perp}$, and also
decompose the solution $u(x)=\xi _k \varphi _k (x)+U(x)$ and
$v(x)=\eta _k \varphi _k (x)+V(x)$, with $U,  V \in \varphi _k^{\perp}$.
The Lyapunov-Schmidt reduction of our problem \eqref{56} is
\begin{equation} \label{59.1}
\begin{gathered}
\eta _k+ \int _D f(\xi _k \varphi _k +U ,\eta _k \varphi _k+V) \varphi _k(x) \, dx=A_k \\
 \int _D g(\xi _k \varphi _k +U ,\eta _k \varphi _k+V) \varphi _k(x) \, dx=B_k
\\
\begin{aligned}
\Delta U+\lambda_k U+V
&=-f(\xi _k \varphi _k +U,\eta _k \varphi _k+V)\\
&\quad +\varphi _k \int _D f(\xi _k \varphi _k +U ,\eta _k \varphi _k+V) \varphi _k\, dx+e_1,
\end{aligned}\\
\begin{aligned}
\Delta V+\lambda_k V
&=-g(\xi _k \varphi _k +U,\eta _k \varphi _k+V) \\
&\quad +\varphi _k \int _D g(\xi _k \varphi _k +U ,\eta _k \varphi _k+V) \varphi _k\, dx+e_2,
\end{aligned}\\
 U=V=0 \quad \text{on } \partial D \,.
\end{gathered}
\end{equation}
To solve this system, we set up a map
$T:  (\alpha _k,W,\beta _k,Z) \to (\xi _k,U,\eta _k,V)$, taking the
space $(R \times \varphi _k^{\perp}) \times (R \times \varphi_k ^{\perp})$ into itself,
by solving  the linear system
\begin{gather*}
\begin{aligned}
\Delta U+\lambda_k U+V
&=-f(\alpha _k \varphi _k +W,\beta _k \varphi _k+Z)\\
&\quad +\varphi _k \int _D f(\alpha _k \varphi _k +W ,\beta _k \varphi _k+Z) \varphi _k\, dx+e_1,
\end{aligned} \\
\begin{aligned}
 \Delta V+\lambda _k  V
&=-g(\alpha _k \varphi _k +W,\beta _k \varphi _k+Z) \\
&\quad +\varphi _k \int _D g(\alpha _k \varphi _k +W ,\beta _k \varphi _k+Z) \varphi _k\, dx+e_2,
\end{aligned} \\
 U=V=0 \quad \text{on } \partial D \,,
\end{gather*}
and then setting
\begin{equation} \label{60}
\begin{gathered}
\xi _k=\alpha _k +B_k- \int _D g(\alpha _k \varphi _k +U ,\beta _k \varphi _k+V) \varphi _k(x) \, dx \\
\eta _k=A_k -\int _D f(\alpha _k \varphi _k +U ,\beta _k \varphi _k+V) \varphi _k\, dx \,.
\end{gathered}
\end{equation}

Fixed points of $T$ give us  solutions of \eqref{59.1}, and hence of \eqref{56}.
By Lemma \ref{lma:33}, the map $T$ is well defined, and
$\|U\|_{C^1(D)}$  and $\|V\|_{C^1(D)}$ are bounded uniformly in
 $(\alpha _k,W,\beta _k,Z)$.
This implies that if $\alpha _k$ is large  and positive,
$\int _D g(\alpha _k \varphi _k +U ,\beta _k \varphi _k+V) \varphi _k\, dx>N_2>B_k$.
When $\alpha _k$ is large in absolute value and negative, this integral
is smaller than $N_1<B_k$.
  It follows that for $\alpha _k$ large and positive,  $\xi _k<\alpha _k$,
while for $\alpha _k$ large and negative,  $\xi _k>\alpha _k$. Hence, we can
find a large $N$, so that if $\alpha _k \in (-N,N)$,  then  $\xi _k \in (-N,N)$.
Since the right hand side of the second line in \eqref{60} is bounded,
we see that if $\beta _k \in (-N,N)$,  then  $\eta _k \in (-N,N)$
(possibly with a larger $N$). The map $T$ is a continuous and compact map,
taking a sufficiently large ball of
$(R \times \varphi _k^{\perp}) \times (R \times \varphi_k ^{\perp})$ into itself.
By Schauder's fixed point theorem it has a fixed point, which gives us
 a solution of \eqref{56}.
\end{proof}

\section{Appendix: A direct proof of the theorem of  Lazer and  Leach}

Many proofs of this classical result are available, including the one above,
and a recent proof in the paper of  Hastings and  McLeod \cite{H},
which also has references to other proofs. In this appendix we present
a proof which is consistent with our approach in the present paper.

\begin{proof}[Proof of  Theorem \ref{thm:2}]
   Let $L^2_n=\{ u(t) \in L^2(R), u(t+2\pi)=u(t) \quad \text{for all }t :
 \int_0^{2 \pi} u(t) \cos nt \, dt=\int_0^{2 \pi} u(t) \sin nt \, dt=0\}$.
 As before, we decompose
\begin{equation} \label{14}
\begin{gathered}
 f(t)=\frac{A}{\pi} \cos nt+\frac{B}{\pi} \cos nt+e(t)  \\
 u(t)=\xi  \cos nt+\eta\cos nt+U(t) \,,
\end{gathered}
\end{equation}
with $e(t),U(t) \in L^2_n$. Multiply \eqref{10} by $\cos nt$
and $\sin nt$ respectively, and integrate
\begin{equation} \label{15}
\begin{gathered}
 \int_0^{2\pi} g \left(\xi  \cos nt+\eta\cos nt+U(t) \right) \cos nt \, dt=A \\
 \int_0^{2\pi} g \left(\xi  \cos nt+\eta\cos nt+U(t) \right) \sin nt \, dt=B \,.
\end{gathered}
\end{equation}
Using these equations, and the ansatz \eqref{14} in \eqref{10}
\begin{equation} \label{16}
\begin{aligned}
 U''+n^2U
&=-g \left(\xi  \cos nt+\eta\cos nt+U(t) \right) \\
&\quad  +\frac{1}{\pi} \Big( \int_0^{2\pi} g \left(\xi  \cos nt+\eta\cos nt+U(t)
\right) \cos nt \, dt \Big)  \cos nt \\
&\quad  +\frac{1}{\pi} \Big( \int_0^{2\pi} g \left(\xi  \cos nt+\eta\cos nt+U(t)
\right) \sin nt \, dt \Big) \sin nt +e(t) \,.
\end{aligned}
\end{equation}
Equations \eqref{15} and \eqref{16} provide the Lyapunov-Schmidt reduction
of \eqref{10}.

To prove the necessity part, we multiply the first equation in \eqref{15} by
$\frac{A}{\sqrt{A^2+B^2}}$, the second one by $\frac{B}{\sqrt{A^2+B^2}}$,
and add, putting the result in the form
\[
\int_0^{2\pi} g \left(\xi  \cos nt+\eta \sin nt+U(t) \right)
\cos (nt-\delta) \, dt=\sqrt{A^2+B^2} \,,
\]
for some $\delta$. Using Lemma \ref{lma:2}, the integral on the left is
 bounded from above by
\[
g(\infty)\int_P \cos (nt-\delta) \, dt +g(-\infty)\int_N \cos (nt-\delta)\, dt
=2\left(g(\infty)-g(-\infty) \right) \,.
\]
Turning to the sufficiency part, we set up a map $T: (a,b,V) \to (\xi,\eta,U)$,
 taking $R \times R \times L^2_n$ into itself, by solving
\begin{equation} \label{17}
\begin{aligned}
U''+n^2U &=-g \left(a  \cos nt+b\sin nt+V(t) \right) \\
&\quad +\frac{1}{\pi} \Big( \int_0^{2\pi} g \left(a  \cos nt+b \sin nt+V(t) \right)
\cos nt \, dt \Big)  \cos nt \\
&\quad  +\frac{1}{\pi} \Big( \int_0^{2\pi} g \left(a  \cos nt+b \sin nt+V(t) \right)
\sin nt \, dt \Big) \sin nt +e(t)
\end{aligned}
\end{equation}
for $U$, and then set
\begin{equation} \label{17a}
\begin{gathered}
\xi=a+A-\int_0^{2\pi} g \left(a  \cos nt+b \cos nt+U(t) \right) \cos nt \, dt \\
\eta=b+B-\int_0^{2\pi} g \left(a  \cos nt+b \cos nt+U(t) \right) \sin nt \, dt \,.
\end{gathered}
\end{equation}
 The right hand side of \eqref{17} is in $L^2_n$, and hence \eqref{17}
has infinitely many solutions of the form $U=U_0+c_1 \cos nt+c_2 \sin nt$.
We select the  unique pair $(c_1,c_2)$, so that $U \in L^2_n$. By the elliptic
theory, we then have (since $g(u)$ is bounded)
\begin{equation} \label{18}
\|U\|_{L^{\infty}} \leq c, \quad \text{with some $c>0$, uniformly in $ (a,b,V)$} \,.
\end{equation}
 We need to show that a sufficiently large ball in $(a,b)$ plane is mapped
into itself in $(\xi, \eta)$ plane.  We have
\[
a  \cos nt+b\sin nt=\sqrt{a^2+b^2} \cos (nt-\delta _1), \quad
 \text{for some $\delta _1$} \,.
\]
Then for $a^2+b^2$ large
\begin{align*}
& aA+bB-a \int_0^{2\pi} g \left(a  \cos nt+b \cos nt+U(t) \right) \cos nt \, dt\\
&-b \int_0^{2\pi} g \left(a  \cos nt+b \cos nt+U(t) \right) \sin nt \, dt \\
&\leq  \sqrt{a^2+b^2} \Big[ \sqrt{A^2+B^2}-\int_0^{2\pi} g
 \left(  \sqrt{a^2+b^2} \cos (nt-\delta _1)+U \right) \cos (nt-\delta _1) \, dt \Big] \\
& <-\mu \sqrt{a^2+b^2}, \quad \text{for some } \mu>0\,,
\end{align*}
because  the integral in the brackets on a sufficiently large ball gets
arbitrary close to $2\left(g(\infty)-g(-\infty) \right)>\sqrt{A^2+B^2}$.
Since $g(u)$ is bounded, we can find $h>0$ so that
\begin{gather*}
\Big( A-\int_0^{2\pi} g \left(a  \cos nt+b \cos nt+U(t) \right)
\cos nt \, dt \Big)^2 <h \,, \\
\Big(B-\int_0^{2\pi} g \left(a  \cos nt+b \cos nt+U(t) \right)
 \sin nt \, dt \Big)^2 <h \,.
\end{gather*}
Then from \eqref{17a}
\[
\xi ^2+\eta ^2<a^2+b^2-2\mu \sqrt{a^2+b^2}+2h<a^2+b^2 \,,
\]
for $a^2+b^2$ large.
Then the map $T$ is a compact and continuous map of a sufficiently
large ball in $R \times R \times L^2_n$ into itself, and we have a
solution by Schauder's fixed point theorem.
\end{proof}

\section{Appendix: Perturbations of forced harmonic oscillators at resonance
without Lazer-Leach condition}

We present next a result of de Figueiredo and  Ni's \cite{FN} type for
harmonic oscillators at resonance:
\begin{equation} \label{70}
u''+n^2 u+g(u)=e(t) \,.
\end{equation}

\begin{theorem}
Assume that $g(u) \in C(R)$ is a bounded function, and
\begin{gather}\label{71}
u g(u)>0 \quad \text{for all $u \in R$} \,, \\
\label{72}
\liminf _{u \to \infty} g(u)>0, \quad \limsup _{u \to -\infty} g(u)<0 \,.
\end{gather}
Assume that $e(t) \in C(R)$ is a $2 \pi$ periodic  function, satisfying
\begin{equation} \label{73}
\int _0^{2\pi} e(t) \sin nt \, dt=\int _0^{2\pi} e(t) \cos nt \, dt=0 \,.
\end{equation}
Then  problem \eqref{70} has a $2 \pi$ periodic solutions.
\end{theorem}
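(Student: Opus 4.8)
The plan is to mimic the Lyapunov--Schmidt argument of Theorem~\ref{thm:2}, replacing the Lazer--Leach inequality \eqref{11} by the de Figueiredo--Ni type hypotheses \eqref{71}, \eqref{72}. The essential simplification is that \eqref{73} forces $A=B=0$, so that $e(t)\in L^2_n$ and the solvability (bifurcation) equations read
\begin{gather*}
\int_0^{2\pi} g\left(\xi\cos nt+\eta\sin nt+U\right)\cos nt\,dt=0,\\
\int_0^{2\pi} g\left(\xi\cos nt+\eta\sin nt+U\right)\sin nt\,dt=0.
\end{gather*}
First I would decompose $u(t)=\xi\cos nt+\eta\sin nt+U(t)$ with $U\in L^2_n$, and set up the map $T:(a,b,V)\to(\xi,\eta,U)$ exactly as in \eqref{17}--\eqref{17a}: solve the projected equation \eqref{17} for the unique $U\in L^2_n$, and put $\xi=a-I_1$, $\eta=b-I_2$, where $I_1,I_2$ are the two integrals above evaluated at $(a,b,U)$. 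Since $g$ is bounded, the regularity estimate \eqref{18} gives the uniform bound $\|U\|_{L^\infty}\le c$, and $T$ is continuous and compact.

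The heart of the matter is to show that a large disc of the $(a,b)$ plane is carried into itself, i.e.\ that $\xi^2+\eta^2<a^2+b^2$ once $a^2+b^2$ is large. Writing $a=\rho\cos\delta$, $b=\rho\sin\delta$ with $\rho=\sqrt{a^2+b^2}$, so that $a\cos nt+b\sin nt=\rho\cos(nt-\delta)$, one has
\[
\xi^2+\eta^2=\rho^2-2(aI_1+bI_2)+I_1^2+I_2^2,\qquad
aI_1+bI_2=\rho\int_0^{2\pi} g\left(\rho\cos(nt-\delta)+U\right)\cos(nt-\delta)\,dt.
\]
The key claim I would establish is that this last integral is bounded below by a positive constant $\mu$, uniformly in $\delta$ and in $U$ with $\|U\|_{L^\infty}\le c$, provided $\rho$ is large. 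Granting this, since $g$ is bounded we also have $I_1^2+I_2^2\le 2h$ for some $h>0$, whence $\xi^2+\eta^2\le\rho^2-2\mu\rho+2h<\rho^2$ for $\rho$ large; Schauder's fixed point theorem then yields a fixed point of $T$, which is a $2\pi$ periodic solution of \eqref{70}.

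To prove the lower bound I would fix a small $\epsilon>0$ and split $(0,2\pi)$ according to $\cos(nt-\delta)>\epsilon$, $\cos(nt-\delta)<-\epsilon$, and $|\cos(nt-\delta)|\le\epsilon$. On the first set $\rho\cos(nt-\delta)+U\ge\rho\epsilon-c\to+\infty$, so by \eqref{72} the factor $g$ stays positive and bounded away from $0$ for $\rho$ large; on the second set the argument tends to $-\infty$ and $g$ stays negative; in either case the integrand $g\cdot\cos(nt-\delta)$ is bounded below by a positive multiple of $|\cos(nt-\delta)|$, and by Lemma~\ref{lma:2} these two contributions together tend to $\big(\liminf_{u\to\infty}g+|\limsup_{u\to-\infty}g|\big)>0$ as $\epsilon\to0$. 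The transition set $\{|\cos(nt-\delta)|\le\epsilon\}$ has measure independent of $\delta$ by periodicity, and there the integrand is controlled by $\epsilon\sup|g|$, so choosing $\epsilon$ small makes this error smaller than the positive main term, giving the required $\mu>0$.

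I expect this uniform lower bound to be the main obstacle: one must ensure the estimate does not degrade as the direction $\delta$ of $(a,b)$ varies and that the bounded perturbation $U$ never spoils the sign of the argument of $g$ where $|\cos(nt-\delta)|$ is not small. Both points are handled by the uniform control $\|U\|_{L^\infty}\le c$ together with the $\delta$-independence of the measure of $\{|\cos(nt-\delta)|\le\epsilon\}$, and this is precisely where the strict inequalities in \eqref{72} (rather than the weaker hypotheses allowing zero limits in \cite{FN}) enter, since they supply the positive constant $\mu$.
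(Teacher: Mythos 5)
Your proposal is correct and follows essentially the same route as the paper: the identical Lyapunov--Schmidt reduction with $A=B=0$, the same map $T$ with the uniform bound $\|U\|_{L^\infty}\le c$, and the same key inequality $aI_1+bI_2\ge\mu\sqrt{a^2+b^2}$ leading to $\xi^2+\eta^2<a^2+b^2$ and Schauder's theorem. The only difference is that you spell out, via the $\epsilon$-splitting of $(0,2\pi)$ and Lemma~\ref{lma:2}, the lower bound that the paper merely asserts from \eqref{72} and \eqref{75}; that added detail is sound and uniform in $\delta$ and $U$ as required.
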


\begin{proof}
We follow the proof of Theorem \ref{thm:2}. As before,  equations \eqref{15},
with $A=B=0$,  and \eqref{16} provide the Lyapunov-Schmidt reduction
of \eqref{70}. To solve these equations, we again set up the map
$T: (a,b,V) \to (\xi,\eta,U)$, taking $R \times R \times L^2_n$ into itself,
 by solving \eqref{17} and then setting
\begin{equation} \label{74}
\begin{gathered}
\xi=a-\int_0^{2\pi} g \left(a  \cos nt+b \cos nt+U(t) \right) \cos nt \, dt
 \equiv a -I_1 \\
\eta=b-\int_0^{2\pi} g \left(a  \cos nt+b \cos nt+U(t) \right) \sin nt \, dt
 \equiv b -I_2\,.
\end{gathered}
\end{equation}
As before,
\begin{equation} \label{75}
\|U\|_{L^{\infty}} \leq c, \quad \text{with some $c>0$, uniformly in $ (a,b,V)$} \,.
\end{equation}
We have
\[
\xi ^2+\eta ^2 =a^2+b^2-2(aI_1+bI_2)+I^2_1+I^2_2 \,.
\]
Using the condition \eqref{72} and the estimate \eqref{75}, we can find a
constant $\mu>0$ such that
\[
aI_1+bI_2
=\sqrt{a^2+b^2} \int_0^{2\pi} g \Big(  \sqrt{a^2+b^2} \cos (nt-\delta _1)+U \Big)
\cos (nt-\delta _1) \, dt
\geq \mu \sqrt{a^2+b^2} \,,
\]
for $a^2+b^2$ large. Denoting by $h$ a bound on $I_1$ and $I_2$, we have
\[
\xi ^2+\eta ^2<a^2+b^2-2\mu \sqrt{a^2+b^2}+2h<a^2+b^2 \,,
\]
for $a^2+b^2$ large.
Then the map $T$ is a compact and continuous map of a sufficiently large
ball in $R \times R \times L^2_n$ into itself, and we have a solution by
Schauder's fixed point theorem.
\end{proof}


\begin{thebibliography}{99}

\bibitem{A}  A. Ambrosetti, G.  Prodi;
\emph{A Primer of Nonlinear Analysis}.
Cambridge Studies in Advanced Mathematics, \textbf{34}. Cambridge University Press,
 Cambridge, (1993).

\bibitem{FN} D. G. de Figueiredo, W.-M.  Ni;
Perturbations of second order linear elliptic problems by nonlinearities
 without Landesman-Lazer condition, \emph{Nonlinear Anal.} \textbf{3}
no. 5, 629-634 (1979).

\bibitem{FL} P. O. Frederickson, A. C. Lazer;
 Necessary and sufficient damping in a second-order oscillator,
\emph{J. Differential Equations} \textbf{5}, 262-270 (1969).

\bibitem{H} S. P. Hastings, J. B. McLeod;
Short proofs of results by Landesman, Lazer, and Leach on problems related
to resonance, \emph{Differential Integral Equations}
\textbf{24}, no. 5-6, 435-441 (2011).

\bibitem{INW} R. Iannacci, M. N. Nkashama. J. R. Ward Jr.;
Nonlinear second order elliptic partial differential equations at resonance,
 \emph{Trans. Amer. Math. Soc.} \textbf{311}, no. 2, 711-726  (1989).

\bibitem{K} P. Korman;
 Curves of equiharmonic solutions, and ranges of nonlinear equations,
\emph{Adv. Differential Equations} \textbf{14},  no. 9-10, 963-984   (2009).

\bibitem{K1} P. Korman;
Global solution curves for boundary value problems, with linear part
at resonance, \emph{Nonlinear Anal.} \textbf{71}, no. 7-8, 2456-2467  (2009).

\bibitem{K2} P. Korman;
 Curves of equiharmonic solutions, and problems at resonance,
\emph{Discrete Contin. Dyn. Syst.} \textbf{34}, no. 7, 2847-2860 (2014).

\bibitem{KL} P. Korman, Y. Li;
 Harmonic oscillators at resonance, perturbed by a non-linear friction force,
\emph{Acta Math. Sci. Ser. B Engl. Ed.} \textbf{34} , no. 4, 1025-1028 (2014).

\bibitem{L}  E. M. Landesman, A. C. Lazer;
Nonlinear perturbations of linear elliptic
 boundary value problems at resonance, \emph{J. Math. Mech.} \textbf{19},
609-623 (1970).

\bibitem{L1} A. C. Lazer, D. E. Leach;
 Bounded perturbations of forced harmonic oscillators at resonance,
\emph{Ann. Mat. Pura Appl.}  \textbf{82} (4), 49-68  (1969).

\bibitem{L2} A. C. Lazer;
 A second look at the first result of Landesman-Lazer type.
Proceedings of the Conference on Nonlinear Differential Equations
(Coral Gables, FL, 1999), 113-119 (electronic),
Electron. J. Differ. Equ. Conf., 5, Texas State Univ., San Marcos, TX, (2000).

\bibitem{N} L. Nirenberg;
 Topics in Nonlinear Functional Analysis, Courant Institute Lecture Notes,
\emph{Amer. Math. Soc.} (1974).

\bibitem{W} S. A. Williams;
A sharp sufficient condition for solution of a nonlinear elliptic
boundary value problem, \emph{J. Differential Equations} \textbf{8},
 580-586 (1970).

\end{thebibliography}
\end{document}